\title[Dynamical sampling for source recovery]{Characterization of frames for source  recovery from dynamical samples}
\author[Akram Aldroubi, Rocio Diaz Martin, Le Gong, Javad Mashreghi,  Ivan Medri]{A. Aldroubi, R. Diaz Martin, L. Gong, J. Mashreghi, I. Medri}
\date{\vspace{-5ex}}
\numberwithin{equation}{section}
\newtheorem{theorem}[equation]{Theorem}
\newtheorem{proposition}[equation]{Proposition}
\newtheorem{lemma}[equation]{Lemma}
\newtheorem{corollary}[equation]{Corollary}
\newtheorem{definition}[equation]{Definition}
\newcommand{\HH}{\mathcal{H}}
\newcommand{\C}{\mathbb{C}}
\newcommand{\N}{\mathbb{N}}
\definecolor{aacolor}{rgb}{0.05, 0.75, 1}
\begin{document}
	\date{}
 \thanks{The research of Akram Aldroubi and Le Gong is supported in part by grant DMS-2208030. Javad Mashreghi was supported by grants from the Fulbright Foundation and the Canada Research Chair program.}
\address{\textrm{(Akram Aldroubi)}
Department of Mathematics,
Vanderbilt University,
Nashville, Tennessee 37240-0001 USA}
\email{aldroubi@math.vanderbilt.edu}

\address{\textrm{(Roc\'io Mart\'in D\'iaz)}
	Department of Mathematics,
	Vanderbilt University,
	Nashville, Tennessee 37240-0001 USA}
\email{rocio.p.diaz.martin@vanderbilt.edu}

\address{\textrm{(Le Gong)}
	Department of Mathematics,
	Vanderbilt University,
	Nashville, Tennessee 37240-0001 USA}
\email{le.gong@vanderbilt.edu}

\address{\textrm{(Javad Mashreghi)}
        Laval University,
        Qu\'ebec, QC, G1V 0A6,  Canada}
\email{javad.mashreghi@mat.ulaval.ca}

\address{\textrm{(Ivan Medri)}
        Tennessee State University, Department of Computer Science,
        Nashville, TN 37209, USA}
\email{imedri@tnstate.edu}

\keywords{Sampling Theory, Forcing, Frames,  Reconstruction, Continuous Sampling}
\subjclass [2010] {46N99, 42C15,  94O20}

\maketitle

\begin{abstract}
In this paper, we address the problem of recovering constant source terms in a discrete dynamical system represented by $x_{n+1} = Ax_n + w$, where $x_n$ is the $n$-th state in a Hilbert space $\mathcal{H}$, $A$ is a bounded linear operator in $\mathcal{B}(\mathcal{H})$, and $w$ is a source term within a closed subspace $W$ of $\HH$. Our focus is on the stable recovery of $w$ using time-space sample measurements formed by inner products with vectors from a Bessel system $\mathcal{G} \subset \mathcal{H}$. We establish the necessary and sufficient conditions for the recovery of $w$ from these measurements, independent of the unknown initial state $x_0$ and for any $w \in W$. This research is particularly relevant to applications such as environmental monitoring, where precise source identification is critical.
\end{abstract}

\section{Introduction}

\subsection{Dynamical sampling for source recovery}

The problem we study in this paper is the recovery of constant source terms driving a discrete dynamical system, using time-space samples of an evolving physical quantity. Specifically, we consider the following discrete-time dynamical system:
\begin{equation}\label{model}
x_{n+1} = Ax_n + w, \quad n \in \mathbb{N}, \qquad w \in W,
\end{equation}
where $x_n \in \mathcal{H}$ is the \textit{$n$-th state of the system}, and $\mathcal{H}$ is a separable Hilbert space. The operator $A \in \mathcal{B}(\mathcal{H})$ is called the \textit{dynamic operator}, $w\in W\subseteq \mathcal{H}$ is the \textit{source} or \textit{forcing term}, and $W$ is a closed subspace of $\mathcal{H}$. The term $x_0 \in \mathcal{H}$ is called the \textit{initial state}. Time-space sample measurements
\begin{equation} \label{DatMat}
\mathcal{D}(x_0, w) = \left[\langle x_n, g_j\rangle\right]_{n,j}
\end{equation}
are obtained by inner products $\langle x_n, g_j\rangle$ with vectors of a Bessel system $\mathcal{G} = \{g_j\}_{j \geq 1} \subset \mathcal{H}$, referred to as the set of \textit{spatial sampling vectors}, and organized in the matrix $\mathcal{D}(x_0, w)$. The data matrix is called the \textit{data} of the system (also called the set of \textit {time-space samples}, \textit{measurements}, or \textit{observations}). 
The problem we will analyze is to find necessary and sufficient conditions for recovering $w$ from the data \eqref{DatMat} in a \textit{stable way}, independent of the unknown $x_0$ and for any $w \in W$. The concept of stability will be made precise in Section \ref{S:proof-forsubspace-tech}.

For example, if $\HH=\ell^2$, the value $x_n(j)$ represents the value of the state at time $n$ and \textit{spatial position} $j$. Given an orthonormal basis $\{b_k\}_{k \in K}$ for $W$, the vector $w = \sum_k c_kb_k$ can be viewed as a weighted sum of source terms $b_k$, each located at positions $k \in K$, with magnitude $c_k$. A set of spatial sampling vectors $\mathcal G = \{g_j\}_{j\geq 1}$,  consisting of spatial sampling vectors (not necessarily in $W$), can be used to obtain the space time-samples \eqref{DatMat}. 

The mathematical problem described above is inspired by environmental monitoring applications for identifying the locations and magnitude of pollution sources. Typically, this necessitates the strategic placement of sensors across different locations to collect relevant data. For instance, in the topic of air pollution highlighted in \cite{RDCV12} and related works, the goal is to determine the emission levels from a specific number of smokestack sources by employing a limited set of sensors.

The recent work \cite{aldroubi2023dynamical}, provides necessary and sufficient conditions for recovering a stationary source in a finite-dimensional dynamical system. In this article, the underlying space is an infinite-dimensional separable Hilbert space $\HH$, and the techniques and some of the results are vastly different.

\subsection{Context of this work}
The area of \textit{Dynamical Sampling} has been extensively studied in the literature \cite{ APT15,ACCMP17,  ACMT17,  ADK13, ADK15, ACCP23, AGHJKR21,  AP17, CMPP17, CMPP20,  CJS15, SCCM23, UZ21, ZLL17}.  Dynamical sampling problems are connected to several areas of mathematics, including control theory, frame theory,  functional analysis, and harmonic analysis \cite{AKh17, ACCP21, ashbrock2023dynamical, BH23, BK23, cabrelli2021multi, cabrelli2022frames,   CH19, CH23, MMM21, DMM21, martin2023error, FS19, GRUV15, KS19, Men22, MT23}, as well as having numerous applications in science and engineering \cite{RBD21, AD20, MBD15, MD17}. The three main problems in the area of dynamical sampling are as follows.
\begin{enumerate}[(i)]
\item \textit{System identification}: Recovering the dynamic operator $A$ by knowledge of the set of measurements $\mathcal{D}(x_0,w)$. See, e.g., \cite{AHKLLV18, AK16, CT22, Tan17}.

\item \textit{Initial state recovery}: Assuming the $A$ is known and $\omega\equiv 0$ or known, the objective is to determine the necessary and sufficient conditions on the spatial sampling set of vectors $\mathcal{G}=\{g_j\}_{j\geq 1}$ and on the operator $A$ for recovering the initial condition $x_0$ in a stable way from the data $\mathcal{D}(x_0,w)$. This problem is also known as the ``time-space trade-off in sampling for recovering the initial state'' since the goal is to exploit time to achieve reconstruction even when $\mathcal{G}$ is finite. See, e.g., \cite{ACCMP17, ACMT17, CMPP17, CMPP20, cabrelli2021multi,  DMM21, MMM21, ZLL17, Zlo22}. 
   
\item \textit{Source recovery}: The focus is on identifying specific types of source terms that drive the dynamical system. See, e.g., \cite{aldroubi2023dynamical, AGK23, AHKK23}.
\end{enumerate}

In this work, we investigate the previously mentioned source recovery problem within the dynamical sampling framework. The key distinction, compared to those in \cite{aldroubi2023dynamical, AGK23, AHKK23}, lies in establishing the necessary and sufficient conditions under which the set $\mathcal{G} = \{g_j\}_{j\geq 1}$ enables stable recovery of the sources. Conversely, the cited works \cite{aldroubi2023dynamical, AGK23, AHKK23} prescribe a set $\mathcal{G}$ and provide algorithms for approximating specific types of time-dependent source terms.

\subsection{Organization of the paper} The organization of the paper is as follows. In Section \ref{S:math-description} we describe the intrinsic mathematical structure of the dynamical system that is our main concern in this work. Section \ref{S:main-results} contains all the main results, consisting of {\em five major theorems}. This part is followed by three consecutive Sections \ref{S:proof-forsubspace-tech}, \ref{S:proofs-part1}, and \ref{S:proofs-part2} which contain the proofs of Main Theorems. In fact, Section \ref{S:proof-forsubspace-tech} contains some technical lemmas that are interesting in their own right, as well as a detailed description of $\mathcal{B}(\ell^2,\ell^\infty)$ and $\mathcal{B}^s(\ell^2,\ell^\infty)$ spaces. The latter is crucial since it is used as the ambient space of measurements in the dynamical system and is exploited in the stable recovery of source file. Finally, Section \ref{S:Example} contains a delicate descriptive example showing that even if the source $w$ belongs to a one-dimensional subspace $W$ within $\HH$, the recovery of $w$ requires an infinite number of time samples when $\HH$ is an infinite-dimensional space. This example reveals the sharpness of our main results. Section \ref{S:concluding} constitutes the final segment of the paper, encompassing concluding remarks, potential generalizations, and avenues for future research.

\section{The mathematical description of problem} \label{S:math-description}
As part of the main problem, given a dynamical system \eqref{model}, we wish to recover the source term $w$ in a stable way from the data provided in measurements $\mathcal D (x_0,w)$. To describe the notion of {\em stable reconstruction} we need to describe some ambient spaces $\mathcal{B}$ in which the data sits together with an appropriate norm $\|\cdot\|_{\mathcal{B}}$ in each case. This setting allows us to describe the reconstruction operator $\mathcal R$ as a continuous linear mapping from the data space $\mathcal{B}$ to the Hilbert space $\HH$ containing the source term $w$. 

\subsection{The measurement space}
There are two cases of dynamical systems that we will wish to consider. Briefly speaking, they are as follows.
\begin{enumerate}[(i)]
\item In the first case, the data matrix $\mathcal{ D}(x_0,w)=\left[\langle x_n,g_j\rangle\right]_{n\in [N], \, j\geq 1}$ is obtained from finitely many iterations, where $[N]=\{0, 1,2,\dots, N-1\}$, $N\geq1$.
\item In the second setting, the data matrix $\mathcal{ D}(x_0,w)=\left[\langle x_n,g_j\rangle\right]_{n\ge0, \, j\geq 1}$ stems from infinitely many time iterations.
\end{enumerate}

In the first case, all data measurements sit in the space  $\mathcal{B} (\ell^2,\C^N)$, which can be described as the family of all infinite matrices $D=[d_{ij}]$ with (finitely many) $N$ rows $r_1,\dots, r_N$, where each row $r_i=(d_{i1},d_{i2},\dots)\in \ell^2$. This space $\mathcal{B} (\ell^2,\C^N)$ is endowed with the norm 
\begin{equation} \label{E:Norm-N}
\|D\|_{\ell^2 \to \C^N} 
=\sum_{i=1}^{N}\left( \sum_{j=1}^{\infty} |d_{ij}|^2 \right)^{1/2}, \qquad \text { for } D \in \mathcal{B}(\ell^2,\C^N).    
\end{equation}

For the second case of infinitely many time iterations, we will use the space $\mathcal{B}^s(\ell^2,\ell^\infty)$ which is a closed subspace of $\mathcal{B}(\ell^2,\ell^\infty)$. The latter is the family of all infinite matrices for which the norm 
\begin{equation} \label{E:Norm-infty}
\|D\|_{\ell^2 \to \ell^\infty} = \sup_{i \geq 1} \left( \sum_{j=1}^{\infty} |d_{ij}|^2 \right)^{1/2}, \qquad \text { for } D \in \mathcal{B}(\ell^2,\ell^\infty),
\end{equation}
is finite. The former space $\mathcal{B}^s(\ell^2,\ell^\infty)$ is the closed subspace consisting of matrices whose rows form a Cauchy sequence in $\ell^2$. More explicitly, we provide the following definition.
\begin {definition} The space $\mathcal{B}^s(\ell^2, \ell^\infty)$ is the set of matrices $\{D = [d_{i,j}]: i \ge 1, j \ge 1\}$ such that each row $r_i$ of $D$ belongs to $\ell^2$, and there exists a $t \in \ell^2$ such that $\lim_{i \to \infty} \|r_i - t\|_{\ell^2} = 0$. The norm $\|D\|_{\ell^2\to\ell^\infty}$ is defined as $\sup_{i \ge 1} \|r_i\|_{\ell^2}$.
\end{definition}

Note that due to the equivalence of norms in $\C^N$,we may replace $\sum\limits_{i=1}^{N}$ by $\sup\limits_{1 \leq i \leq N}$  in \eqref{E:Norm-N}, and so $\mathcal{B}^s(\ell^2,\C^N)=\mathcal{B}(\ell^2,\C^N)$. A detailed description of these spaces, in particular,  
an equivalent description of $\mathcal{B}^s(\ell^2, \ell^\infty)$, 
is available in Section \ref{S:proof-forsubspace-tech}. Throughout the general description of the spaces $\mathcal{B}(\ell^2,\C^N)$ and $\mathcal{B}(\ell^2,\ell^\infty)$, we use the index $i$, commencing from the initial value $1$, for the rows of matrices involved in the discussion. However, when analyzing dynamical systems, we adopt a different indexing scheme, mostly denoted by $n$ and starting at $0$.

\subsection{Generalized source recovery problem}
We also treat dynamical systems that are a generalized version of \eqref{model}. In this general setting, we assume that the states $x_n$, $n \geq 1$, are obtained via a recursive equation
\begin{equation} \label{model-2}
x_n = \mathcal{F}_n(x_0,\dots,x_{n-1},w), \qquad n \geq 1,  
\end{equation}
with $w$ belonging to a closed subspace $W$ of $\HH$. In particular, $\mathcal{F}_n$ can be a nonlinear functional of its arguments. Another prototype example of such a general system is
\[
x_n = A_{n,0}x_0+\dots+A_{n,n-1}x_{n-1}+B_{n}w, \qquad n \geq 1,
\]
where $A$s and $B$s are bounded linear operators on $\HH$. 

To present some solid results in the setting \eqref{model-2}, we assume the system satisfies the following properties.
\begin{enumerate}[(i)]
\item For each $w \in W$, there is a corresponding {\em unique stationary state}. More explicitly, given any $w \in W$, there is an initial state $x_0(w)$ such that 
\[
x_n=x_0(w), \qquad n \geq 1. 
\]
\item The correspondence between $w$ and its unique stationary state $x_0(w)$ is bounded. That is, the mapping $\mathcal{S}:W\to\HH$ defined by $\mathcal{S}(w):=x_0(w)$ is a bounded linear operator, and
we call $\mathcal{S}$ the {\em stationary mapping operator}.
\item For any source term $w \in W$ and any arbitrary initial state $x_0 \in \HH$, we have
\[
\lim_{n \to \infty} x_n = \mathcal{S}(w), 
\]
where the above limit is in $\|\cdot\|_\HH$.
\end{enumerate}

As an illustrative example, when considering a dynamical system of the form \eqref{model}, under the hypothesis $\rho(A)<1$, we will see that $x_0(w)=(I-A)^{-1}w$ is the unique stationary state corresponding to a given $w$. Besides, notice that if $\mathcal{F}_n$ is linear and $\mathcal{S}$ is well-defined, i.e., (i) holds, then $\mathcal{S}$ is necessarily a linear mapping.

\begin{definition} \label {genmod}
 A dynamical system  \eqref{model-2} satisfying the above properties (i)--(iii) will be referred to by the quadruple $(\HH, \, W, \mathcal{F}, \, \mathcal{S})$.   
\end{definition}

\subsection{Stable recovery}
Consider a dynamical system of the form \eqref{model} or of the form $(\HH,W,\mathcal{F},\mathcal{S})$, starting at an arbitrary initial state $x_0\in \HH$ with measurements $\mathcal{D}(x_0,w)$ given by sampling through a Bessel sequence $\mathcal{G}=\{g_j\}_{j\geq 1}$  in $\HH$ as in \eqref{DatMat}.
\begin{enumerate}[(i)]
\item If there are finitely many time iterations, we say that the source term $w\in W \subseteq \HH$ can be recovered from the data $\mathcal{D}(x_0,w)$ in a stable way if there exists a bounded linear operator 
$\mathcal{R}: \mathcal{B}(\ell^2,\C^N)\to \HH$ such that 
\[
\mathcal{R} \big( \mathcal{D}(x_0,w) \big) = w 
\]
for all $x_0\in\HH$ and all $w\in W$.

\item If we have infinitely many time iterations, we say that the source term $w\in W \subseteq \HH$ can be recovered from the data $\mathcal{D}(x_0,w)$ in a stable way if there exists a bounded linear operator 
$\mathcal{R}: \mathcal{B}^s(\ell^2,\ell^\infty)\to \HH$ such that 
\[
\mathcal{R} \big( \mathcal{D}(x_0,w) \big) = w 
\]
for all $x_0\in\HH$ and all $w\in W$.
\end{enumerate}

The differences between the measurement spaces $\mathcal{B}(\ell^2,\C^N)$ and $\mathcal{B}^s(\ell^2,\ell^\infty)$, and consequently the emerging reconstruction operators $\mathcal{R}$, are profound and is discussed in depth in the following sections.

\section{Main Results} \label{S:main-results}
In this section, we gather the main results accompanied by concise descriptions. In later sections, we provide the proofs.

\subsection{The reconstruction}
Our first results reveal the main property of the $\mathcal{B}^s(\ell^2, \ell^\infty)$ space and its role in the stable reconstruction process. 


\begin{theorem} \label{T:B-strong-recovery}
Let $\HH$ be a separable Hilbert space, and let $\mathcal{G}=\{g_j\}_{j\geq 1}$ be any Bessel sequence in $\HH$ with optimal Bessel bound $C_{\mathcal{G}}>0$. Then, for each  $D=[d_{ij}] \in \mathcal{B}^{s}(\ell^2,\ell^\infty)$, the limit
\[
\lim D\mathcal{G} :=\lim_{i \to \infty} \sum_{j=1}^{\infty} d_{ij} g_j
\]
exists in $\HH$ and, moreover, the mapping
\[
\begin{array}{cccc}
\mathcal{R}_{\mathcal{G}}: & \mathcal{B}^{s}(\ell^2,\ell^\infty) &  \longrightarrow  & \HH\\
& D & \longmapsto & \lim D\mathcal{G} 
\end{array}
\]
is a well-defined bounded operator whose norm is precisely $\sqrt{C_{\mathcal{G}}}$.
\end{theorem}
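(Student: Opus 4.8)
The plan is to exploit the fact that a matrix $D \in \mathcal{B}^s(\ell^2,\ell^\infty)$ has rows $r_i$ that form a Cauchy sequence in $\ell^2$, converging to some $t \in \ell^2$, together with the standard fact that applying a Bessel sequence gives a bounded synthesis operator. Concretely, let $T_{\mathcal{G}}: \ell^2 \to \HH$ be the synthesis operator $T_{\mathcal{G}}(c) = \sum_j c_j g_j$. Since $\mathcal{G}$ is Bessel with optimal bound $C_{\mathcal{G}}$, this $T_{\mathcal{G}}$ is bounded with $\|T_{\mathcal{G}}\|^2 = C_{\mathcal{G}}$ (the operator norm of synthesis equals the square root of the optimal Bessel bound — this is the classical characterization, and equality, not just inequality, is what we need for the final norm claim). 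For each $i$, the $i$-th "row vector" $r_i = (d_{i1}, d_{i2},\dots) \in \ell^2$, so $\sum_j d_{ij} g_j = T_{\mathcal{G}}(r_i)$ converges in $\HH$.

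First I would establish existence of $\lim D\mathcal{G}$: since $r_i \to t$ in $\ell^2$ and $T_{\mathcal{G}}$ is continuous, $T_{\mathcal{G}}(r_i) \to T_{\mathcal{G}}(t)$ in $\HH$. So the limit exists and equals $T_{\mathcal{G}}(t)$ where $t$ is the $\ell^2$-limit of the rows. Next, linearity of $\mathcal{R}_{\mathcal{G}}$ is immediate: the map $D \mapsto t$ (taking a matrix to the $\ell^2$-limit of its rows) is linear, and $T_{\mathcal{G}}$ is linear, so the composition is linear. For boundedness, I would bound $\|\mathcal{R}_{\mathcal{G}}(D)\|_{\HH} = \|T_{\mathcal{G}}(t)\|_{\HH} \le \|T_{\mathcal{G}}\| \, \|t\|_{\ell^2} = \sqrt{C_{\mathcal{G}}}\,\|t\|_{\ell^2}$, and then observe that $\|t\|_{\ell^2} = \lim_i \|r_i\|_{\ell^2} \le \sup_i \|r_i\|_{\ell^2} = \|D\|_{\ell^2 \to \ell^\infty}$. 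This gives $\|\mathcal{R}_{\mathcal{G}}\| \le \sqrt{C_{\mathcal{G}}}$.

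For the reverse inequality (that the norm is \emph{exactly} $\sqrt{C_{\mathcal{G}}}$), I would construct near-optimal test matrices. Pick $c \in \ell^2$ with $\|c\|_{\ell^2} = 1$ and $\|T_{\mathcal{G}}(c)\|_{\HH}$ close to $\sqrt{C_{\mathcal{G}}}$ (possible by definition of the optimal Bessel bound as the supremum). Let $D$ be the matrix all of whose rows equal $c$; then $D \in \mathcal{B}^s(\ell^2,\ell^\infty)$ trivially (constant, hence Cauchy, sequence of rows), $\|D\|_{\ell^2\to\ell^\infty} = \|c\|_{\ell^2} = 1$, and $\mathcal{R}_{\mathcal{G}}(D) = T_{\mathcal{G}}(c)$, so $\|\mathcal{R}_{\mathcal{G}}(D)\|_{\HH}$ is close to $\sqrt{C_{\mathcal{G}}}$. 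Taking the supremum over such $c$ yields $\|\mathcal{R}_{\mathcal{G}}\| \ge \sqrt{C_{\mathcal{G}}}$, completing the argument.

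I expect the main obstacle — really the only nontrivial point — to be the identity $\|T_{\mathcal{G}}\| = \sqrt{C_{\mathcal{G}}}$ relating the operator norm of the synthesis operator to the optimal Bessel bound. If the paper has not already recorded this as a lemma, I would include a short justification: $\|T_{\mathcal{G}}(c)\|^2 = \langle T_{\mathcal{G}} T_{\mathcal{G}}^* T_{\mathcal{G}} c, c\rangle$-type manipulations, or more directly, $\sup_{\|c\|=1}\|T_{\mathcal{G}}c\|^2 = \|T_{\mathcal{G}}T_{\mathcal{G}}^*\| = \|S_{\mathcal{G}}\|$ where $S_{\mathcal{G}}$ is the frame-type operator, and $\|S_{\mathcal{G}}\|$ equals the optimal Bessel bound by definition (the optimal bound is $\sup_{\|f\|=1}\sum_j |\langle f, g_j\rangle|^2 = \sup_{\|f\|=1}\langle S_{\mathcal{G}} f, f\rangle = \|S_{\mathcal{G}}\|$ since $S_{\mathcal{G}} \ge 0$). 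Everything else is a routine continuity-plus-density argument, so the write-up should be short.
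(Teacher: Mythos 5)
Your proof is correct and takes essentially the same route as the paper's: both establish existence of the limit from the $\ell^2$-convergence of the rows, obtain $\|\mathcal{R}_{\mathcal{G}}\|\le\sqrt{C_{\mathcal{G}}}$ from the synthesis bound $\bigl\|\sum_j d_{ij}g_j\bigr\|_{\HH}\le\sqrt{C_{\mathcal{G}}}\,\|r_i\|_{\ell^2}$, and attain the norm using constant-row matrices. The only cosmetic difference is that you package the argument through the synthesis operator $T_{\mathcal{G}}$ and spell out the identity $\|T_{\mathcal{G}}\|=\sqrt{C_{\mathcal{G}}}$, which the paper uses implicitly in its final step.
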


\subsection{Finite time iterations}
In this part, the reconstruction of the source term is done by using a finite number of time iterations of the dynamical system \eqref{model}.

In our first result, the source $w$ can be any point of the ambient space $\HH$. From the practical point of view, this case is not as interesting as the upcoming restricted case to closed subspaces, since in general, the sources are not located at every spatial location. However,  from the mathematical point of view, it has an elegant description of the solution to the source recovery problem. 

\begin{theorem}\label{thm1}
Let $\HH$ be a separable Hilbert space, and let $\mathcal{G}=\{g_j\}_{j\geq 1}$ be a Bessel sequence in $\HH$. Consider the dynamical system \eqref{model}, with an arbitrary initial state $x_0\in \HH$. Then the source term $w\in\HH$ can be recovered from the measurements 
$\mathcal{ D}(x_0,w)=\left[\langle x_n,g_j\rangle\right]_{n\in [N], \, j\geq 1} $
in a stable way for some $1\leq N<\infty$ if and only if $\mathcal{G}=\{g_j\}_{j\geq 1}$ is a frame for $\HH$.
\end{theorem}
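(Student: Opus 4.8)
The plan is to extract, from the finitely many time samples, enough information to reconstruct the linear functionals $\langle w, g_j\rangle$ for every $j$, and then to observe that recovering $w$ itself in a stable way from the sequence $(\langle w, g_j\rangle)_j$ is exactly the statement that $\mathcal{G}$ is a frame. First I would set up the data structure: with $N$ time iterations and the recursion $x_{n+1}=Ax_n+w$, we have $x_n = A^n x_0 + (I+A+\dots+A^{n-1})w$, so the $n$-th row of $\mathcal{D}(x_0,w)$ is the sequence $\big(\langle A^n x_0, g_j\rangle + \langle (I+\dots+A^{n-1})w, g_j\rangle\big)_j$. The key algebraic move is to form first-order differences of consecutive rows: $x_{n+1}-x_n = A^{n}(A-I)x_0 + A^{n}w$, which still contains the unknown $x_0$. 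Iterating the difference once more, $x_{n+2}-2x_{n+1}+x_n = A^{n}(A-I)^2 x_0 + A^{n}(A-I)w$; this does not by itself kill $x_0$. The clean way is instead to note that from rows $n=0,1$ we get access to $\langle x_1 - x_0, g_j\rangle = \langle (A-I)x_0 + w, g_j\rangle$, and more generally the differences span, together with row $0$, the same data as $\{\langle x_0,g_j\rangle\}\cup\{\langle A^n w + A^n(A-I)x_0, g_j\rangle : 0\le n\le N-2\}$. Since a bounded reconstruction operator on the finite-row space $\mathcal{B}(\ell^2,\C^N)$ may take arbitrary bounded linear combinations of the rows, the crux is purely: can a continuous operator on $\mathcal{B}(\ell^2,\C^N)$ output $w$ for all $x_0$?

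For the ``only if'' direction, suppose such a bounded $\mathcal{R}:\mathcal{B}(\ell^2,\C^N)\to\HH$ exists. Testing on the initial state $x_0 = 0$, the rows of $\mathcal{D}(0,w)$ are $\big(\langle (I+\dots+A^{n-1})w, g_j\rangle\big)_j$ for $n\in[N]$; in particular the $n=1$ row (if $N\ge 1$, noting $x_0=0,x_1=w$) is exactly $(\langle w,g_j\rangle)_j$, which lies in $\ell^2$ with $\ell^2$-norm controlled by $\sqrt{C_\mathcal{G}}\,\|w\|$ via the Bessel bound. Composing $\mathcal{R}$ with the embedding that places $(\langle w, g_j\rangle)_j$ into the appropriate row and the remaining rows as the corresponding partial-sum data, we must have that $w$ is determined by, and depends boundedly on, the data $(\langle w, g_j\rangle)_j \in \ell^2$. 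But to genuinely isolate the dependence on $(\langle w,g_j\rangle)_j$ alone I would actually take two different sources $w, w'$ with $\langle w, g_j\rangle = \langle w', g_j\rangle$ for all $j$: their $x_0=0$ data agree in row $1$; to make all rows agree I would iterate and use linearity, or more simply argue that if $\langle w, g_j\rangle = 0$ for all $j$ then all partial-sum data $\langle (I+\dots+A^{n-1})w,g_j\rangle$ need not vanish — so the cleanest route is the difference trick applied to $x_0=0$ twice, giving direct access to $(\langle w, g_j\rangle)_j$ as a row, then invoking that two $w$'s with the same $\mathcal{G}$-coefficients must be sent to the same output, forcing $\mathcal{G}$ to be \emph{complete}; and the boundedness of $\mathcal{R}$ together with the Bessel bound forces the lower frame inequality $\|w\|^2 \lesssim \sum_j |\langle w, g_j\rangle|^2$. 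Hence $\mathcal{G}$ is a frame.

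For the ``if'' direction, assume $\mathcal{G}$ is a frame for $\HH$, with canonical dual frame $\{\tilde g_j\}$. Take $N=2$. From rows $0$ and $1$ of $\mathcal{D}(x_0,w)$ we read off $\langle x_1 - x_0, g_j\rangle = \langle (A-I)x_0 + w, g_j\rangle$; this is not yet $w$. To remove $x_0$ I would instead take $N=3$ and use the second difference $\langle x_2 - x_1,g_j\rangle - \langle x_1-x_0,g_j\rangle = \langle A(A-I)x_0 + Aw - (A-I)x_0 - w, g_j\rangle = \langle (A-I)^2 x_0 + (A-I)w, g_j\rangle$ — still carries $x_0$. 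The honest fix: $x_0$ cannot in general be eliminated with finitely many iterations \emph{unless} we reconstruct $x_0$ and $w$ jointly, which fails since $x_0$ ranges over all of $\HH$. So the correct ``if'' argument must be that $w$ is recoverable from the $x_0=0$ part — but $x_0\neq 0$ in general. Resolving this tension is the main obstacle, and I expect the paper handles it by observing (as the differences above show) that $w$ enters the row differences \emph{only} through terms $A^n w$ which, combined across $n=0,\dots,N-1$ and with the $x_0$-terms having the structured form $A^n(A-I)^k x_0$, allow one to solve a finite linear system for $(\langle w,g_j\rangle)_j$ provided $(A-I)$ is invertible or one restricts to a setting where this is automatic — and then apply $\mathcal{R}_{\text{fin}}(D) = \sum_j \big(\text{extracted }\langle w,g_j\rangle\big)\,\tilde g_j$, whose boundedness on $\mathcal{B}(\ell^2,\C^N)$ follows from the frame property of $\{\tilde g_j\}$ and the continuity of taking finite linear combinations of rows. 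The hard part, to summarize, is the careful bookkeeping that isolates $(\langle w, g_j\rangle)_j$ as a bounded linear function of the finitely many rows independently of $x_0$; once that is done, both directions reduce to the elementary equivalence ``$\mathcal{G}$ is a frame $\iff$ the analysis operator has bounded left inverse.''
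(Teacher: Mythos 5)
There are genuine gaps in both directions. In the ``if'' direction you talk yourself out of the correct (and simple) argument: since $\mathcal{G}$ is a frame for all of $\HH$, row $0$ of the data, $(\langle x_0,g_j\rangle)_j$, determines $x_0$ boundedly via the dual frame, $x_0=\sum_k\langle x_0,g_k\rangle\widetilde g_k$. There is no need to ``eliminate'' $x_0$ by differencing rows; one computes its contribution and subtracts it. Writing $A^*g_j=\sum_i a_{ij}g_i$ with $a_{ij}=\langle A^*g_j,\widetilde g_i\rangle$, one has
\[
\langle w,g_j\rangle=\langle x_1,g_j\rangle-\langle x_0,A^*g_j\rangle=\langle x_1,g_j\rangle-\sum_{i}\overline{a_{ij}}\,\langle x_0,g_i\rangle,
\]
and then $w=\sum_j\langle w,g_j\rangle\widetilde g_j$ gives a reconstruction operator on $\mathcal{B}(\ell^2,\C^2)$ whose boundedness follows from the frame bounds and $\|A\|$. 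No invertibility of $I-A$ is needed, contrary to what you suggest; your worry that ``$x_0$ ranges over all of $\HH$'' is exactly what the frame hypothesis neutralizes.

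In the ``only if'' direction the gap is more serious. For general $N$ the stability bound applied at $x_0=0$ reads $\|w\|^2\le C\sum_{n=0}^{N-1}\sum_j|\langle(I+\dots+A^{n-1})w,g_j\rangle|^2$, whose right-hand side involves $\sum_j|\langle A^kw,g_j\rangle|^2$ for $k$ up to $N-2$, not just $\sum_j|\langle w,g_j\rangle|^2$; you never show how to dominate these extra terms. Your claim that two sources with the same $\mathcal{G}$-coefficients must produce the same output is false for the same reason (the data $\mathcal{D}(0,w)$ is not a function of $(\langle w,g_j\rangle)_j$ alone), and in any case completeness would not give the lower frame bound. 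The paper's key idea, which is missing from your plan, is to first test the stability bound on the stationary pair $w=(I-A)x_0$: then $x_n=x_0$ for all $n$, yielding $\|(I-A)x_0\|^2\le NC\sum_j|\langle x_0,g_j\rangle|^2$ and, by iteration, $\|(I-A)^kx_0\|^2\le C_k\sum_j|\langle x_0,g_j\rangle|^2$. Rewriting $\sum_{k=0}^{n-1}A^kw$ as a combination of the vectors $(I-A)^kw$ and applying these bounds together with the Bessel bound converts the $x_0=0$ inequality into the lower frame inequality $\|w\|^2\le C''\sum_j|\langle w,g_j\rangle|^2$. Without that step the only-if direction closes only in the special case $N=2$, where $\mathcal{D}(0,w)$ happens to consist of the zero row and $(\langle w,g_j\rangle)_j$; the hypothesis, however, allows arbitrary finite $N$.
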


In the next result, we restrict the source term to be in a closed subspace $W$ of $\HH$. From a practical point of view, despite being mathematically more challenging, this is the most interesting case. As a matter of fact, in applications such as environmental monitoring, one has prior knowledge of where the main pollution sources are located which is translated into considering closed subspaces of the ambient space $\HH$. However, the mathematical description of a solution turns out to be more 
subtle.

\begin{theorem} \label{thm2}
Let $\HH$ be a separable Hilbert space, and let $\mathcal{G}=\{g_j\}_{j\geq 1}$ be a Bessel sequence in $\HH$. Let $W$ be a closed subspace of $\HH$, let $P_W:\HH\to\HH$ be the orthogonal projection onto $W$, and assume that source term $w$ belongs to $W$. Consider the dynamical system \eqref{model}, with an arbitrary initial state $x_0\in \HH$, and with $1\notin\sigma(A)$. If the source term $w\in W$ can be recovered  from the measurements 
$\mathcal{D}(x_0,w)=
\left[\langle x_{n}, g_j\rangle\right]_{n\in [N], \, j\geq 1} $ in a stable way for some $1\leq N<\infty$,
then $\{P_W(I-A^*)^{-1}g_j\}_{j\geq 1}$ is a frame for $W$. 
\end{theorem}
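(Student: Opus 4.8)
## Proof Proposal for Theorem \ref{thm2}

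The plan is to exploit the boundedness of the hypothetical reconstruction operator $\mathcal{R}:\mathcal{B}(\ell^2,\C^N)\to\HH$ by feeding it the data produced when the system starts exactly at its stationary state. First I would solve the recursion explicitly: iterating \eqref{model} and using $1\notin\sigma(A)$, so that $I-A$ is invertible, gives
\[
x_n=A^n\big(x_0-(I-A)^{-1}w\big)+(I-A)^{-1}w,\qquad n\geq 0 .
\]
Writing $h_j:=P_W(I-A^*)^{-1}g_j\in W$ and $y_0:=x_0-(I-A)^{-1}w$, and using $w\in W$ so that $\langle w,P_Wz\rangle=\langle w,z\rangle$, this yields
\[
\langle x_n,g_j\rangle=\langle y_0,(A^*)^n g_j\rangle+\langle w,h_j\rangle .
\]
Before anything else I would record that $\{h_j\}_{j\geq1}$ is a Bessel sequence in $W$: since $\{g_j\}$ is Bessel in $\HH$ with bound $C_{\mathcal{G}}$ and $(I-A^*)^{-1}=\big((I-A)^{-1}\big)^*$ and $P_W$ are bounded, one gets $\sum_j\abs{\langle w,h_j\rangle}^2=\sum_j\abs{\langle (I-A)^{-1}w,g_j\rangle}^2\leq C_{\mathcal{G}}\,\norm{(I-A)^{-1}}^2\,\norm{w}_\HH^2$. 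This already supplies the upper frame bound for $\{h_j\}$ on $W$, so only the lower bound remains to be proved.

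For the lower bound, the key move is to choose the initial state $x_0=(I-A)^{-1}w$, i.e. $y_0=0$. Then $x_n=(I-A)^{-1}w$ for every $n\geq 0$, so the data matrix $D_w:=\mathcal{D}\big((I-A)^{-1}w,w\big)$ has all $N$ of its rows equal to the single vector $r_w:=(\langle w,h_j\rangle)_{j\geq1}\in\ell^2$. Hence, by \eqref{E:Norm-N},
\[
\norm{D_w}_{\ell^2\to\C^N}=\sum_{i=1}^N\norm{r_w}_{\ell^2}=N\Big(\sum_{j\geq1}\abs{\langle w,h_j\rangle}^2\Big)^{1/2}.
\]
Since $\mathcal{R}$ is bounded and the recovery is required to hold for \emph{all} initial states, in particular this one, we have $\mathcal{R}(D_w)=w$, and therefore $\norm{w}_\HH\leq\norm{\mathcal{R}}\,\norm{D_w}_{\ell^2\to\C^N}=N\norm{\mathcal{R}}\big(\sum_j\abs{\langle w,h_j\rangle}^2\big)^{1/2}$ for every $w\in W$. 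Squaring gives $\sum_j\abs{\langle w,h_j\rangle}^2\geq (N\norm{\mathcal{R}})^{-2}\norm{w}_\HH^2$, which is the lower frame bound. Combined with the Bessel bound from the first paragraph, $\{h_j\}_{j\geq1}=\{P_W(I-A^*)^{-1}g_j\}_{j\geq1}$ is a frame for $W$.

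The argument is short because the real content is the choice of test input; the only points requiring a little care are checking that every $\mathcal{D}(x_0,w)$, and in particular $D_w$, genuinely lies in $\mathcal{B}(\ell^2,\C^N)$, which follows from the Bessel property of $\{g_j\}$, and making sure the specific norm \eqref{E:Norm-N} is used so that the harmless factor $N$ still leaves a genuine positive lower frame bound. I do not anticipate a serious obstacle here; the conceptual point to get right is that stable recoverability of $w$ for \emph{all} $x_0$ forces, via the stationary initial state, a bounded-below analysis inequality for $\{h_j\}$ on $W$.
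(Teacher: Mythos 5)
Your proposal is correct and follows essentially the same route as the paper: both proofs choose the stationary initial state $x_0=(I-A)^{-1}w$ so that every row of the data matrix equals $(\langle w, P_W(I-A^*)^{-1}g_j\rangle)_{j\geq 1}$, derive the lower frame bound from the boundedness of $\mathcal{R}$, and obtain the upper bound from the Bessel property of $\mathcal{G}$ together with the boundedness of $(I-A)^{-1}$. The only cosmetic difference is that you track the operator norm $\|\mathcal{R}\|$ and the norm \eqref{E:Norm-N} explicitly, while the paper works with the squared stability inequality \eqref{bdd}; these are equivalent.
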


There is an example in Section \ref{S:Example}, that shows that in the case where $W\subsetneq \HH$, the source $w$ cannot be recovered from finitely many iterations $N$. Thus, on the one hand, this example reveals the sharpness of Theorem \ref{thm2}. On the other hand, in the next subsection, we consider the case where $W\subsetneq \HH$ and the data $\mathcal D (x_0,w)$ consists of infinitely many time samples.

\subsection {Infinite time iterations}
Dynamical system \eqref{model} is a special of the more general dynamical system \eqref{model-2}. In fact, the Model  \eqref{model-2} can be even nonlinear. The following theorem provides a full characterization of stable reconstruction from data measurements $\mathcal{D}(x_0,w)$ for the system \eqref{model-2}. 

\begin{theorem}\label{thmforsubspace-general}
Let $\HH$ be a separable Hilbert space, let $W$ be a closed subspace of $\HH$, and let $\mathcal{G}=\{g_j\}_{j\geq 1}$ be a Bessel sequence in $\HH$.
Consider the dynamical system $(\HH, \, W, \mathcal{F}, \, \mathcal{S})$ (see Definition \ref {genmod}) with any initial state $x_0 \in \HH$, and assume that $\mathcal{F}$ is linear. Then, each source term $w\in W$ can be recovered from the measurements $\mathcal{D}(x_0,w) = [\langle x_{n}, g_j\rangle]_{n\ge 0, j\ge 1}$ in a stable way if and only if $\{\mathcal{S}^{*} g_j\}_{j \geq 1}$ is a frame for $W$.
\end{theorem}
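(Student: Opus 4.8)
\textbf{Proof proposal for Theorem \ref{thmforsubspace-general}.}

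The plan is to reduce the infinite-time-iteration problem to a statement about the single stationary vector $\mathcal{S}(w)$ via Theorem \ref{T:B-strong-recovery}, and then to identify the frame condition on $\{\mathcal{S}^* g_j\}_{j\geq 1}$ as exactly the obstruction to, respectively, boundedness and surjectivity of the natural reconstruction map. First I would set up the analysis operator $T_{\mathcal{S}^*\mathcal{G}}:W\to\ell^2$, $w\mapsto (\langle w,\mathcal{S}^*g_j\rangle)_j=(\langle \mathcal{S}(w),g_j\rangle)_j$; since $\mathcal{G}$ is Bessel and $\mathcal{S}$ is bounded, $\{\mathcal{S}^*g_j\}_{j\geq 1}$ is automatically a Bessel sequence in $W$, so $T_{\mathcal{S}^*\mathcal{G}}$ is well-defined and bounded, and $\{\mathcal{S}^*g_j\}$ is a frame for $W$ iff $T_{\mathcal{S}^*\mathcal{G}}$ is bounded below, iff its synthesis operator is surjective onto $W$.

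For the ``if'' direction, assume $\{\mathcal{S}^*g_j\}_{j\geq 1}$ is a frame for $W$. Using property (iii) of $(\HH,W,\mathcal{F},\mathcal{S})$, for any $x_0\in\HH$ and $w\in W$ we have $x_n\to\mathcal{S}(w)$ in $\|\cdot\|_\HH$, hence the rows $r_n=(\langle x_n,g_j\rangle)_j$ of $\mathcal{D}(x_0,w)$ converge in $\ell^2$ to $t:=(\langle \mathcal{S}(w),g_j\rangle)_j$ (continuity of the Bessel analysis operator), so $\mathcal{D}(x_0,w)\in\mathcal{B}^s(\ell^2,\ell^\infty)$ and $\mathcal{R}_{\mathcal{G}}(\mathcal{D}(x_0,w))=\lim_n \sum_j \langle x_n,g_j\rangle g_j = \sum_j \langle \mathcal{S}(w),g_j\rangle g_j$ by Theorem \ref{T:B-strong-recovery}. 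Thus from the data we recover, in a bounded way, the vector $S_{\mathcal{G}}(\mathcal{S}(w))$ where $S_{\mathcal{G}}$ is the (bounded) frame-like operator $h\mapsto\sum_j\langle h,g_j\rangle g_j$. Now I would compose with the bounded operator that inverts the frame $\{\mathcal{S}^*g_j\}$: writing $S_W:W\to W$ for the (boundedly invertible) frame operator $S_W v=\sum_j\langle v,\mathcal{S}^*g_j\rangle \mathcal{S}^*g_j$, one checks $\langle S_W v, u\rangle=\langle S_{\mathcal G}\mathcal S v,\mathcal S u\rangle$... more directly: $w=S_W^{-1}\big(\sum_j\langle \mathcal{S}(w),g_j\rangle\,\mathcal{S}^*g_j\big)$, and $\sum_j\langle \mathcal{S}(w),g_j\rangle\,\mathcal{S}^*g_j = \mathcal{S}^*\big(\sum_j\langle\mathcal{S}(w),g_j\rangle g_j\big)$ by continuity of $\mathcal{S}^*$ and the fact that the partial sums converge. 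Hence $\mathcal{R}:=S_W^{-1}\circ\mathcal{S}^*\circ\mathcal{R}_{\mathcal{G}}:\mathcal{B}^s(\ell^2,\ell^\infty)\to W\subseteq\HH$ is a composition of bounded operators with $\mathcal{R}(\mathcal{D}(x_0,w))=w$ for all $x_0,w$, which is exactly stable recovery.

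For the ``only if'' direction, suppose a bounded linear $\mathcal{R}:\mathcal{B}^s(\ell^2,\ell^\infty)\to\HH$ recovers every $w\in W$ from $\mathcal{D}(x_0,w)$. Taking the particular initial state $x_0=\mathcal{S}(w)$ (the stationary one), the data matrix has \emph{every} row equal to $t=(\langle\mathcal{S}(w),g_j\rangle)_j=(\langle w,\mathcal{S}^*g_j\rangle)_j=(T_{\mathcal{S}^*\mathcal{G}}w)_j$; call this constant matrix $\mathbf{1}\otimes t$. Then $\|\mathbf{1}\otimes t\|_{\ell^2\to\ell^\infty}=\|t\|_{\ell^2}=\|T_{\mathcal{S}^*\mathcal{G}}w\|_{\ell^2}$, and $\mathcal{R}(\mathbf{1}\otimes t)=w$, so $\|w\|_\HH=\|\mathcal{R}(\mathbf{1}\otimes t)\|_\HH\leq\|\mathcal{R}\|\,\|T_{\mathcal{S}^*\mathcal{G}}w\|_{\ell^2}$ for every $w\in W$. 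This is precisely the lower frame inequality for $\{\mathcal{S}^*g_j\}_{j\geq 1}$, and together with the already-noted Bessel bound it gives that $\{\mathcal{S}^*g_j\}_{j\geq 1}$ is a frame for $W$.

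The main obstacle I anticipate is the ``if'' direction's bookkeeping: one must verify that $\mathcal{D}(x_0,w)$ genuinely lands in $\mathcal{B}^s(\ell^2,\ell^\infty)$ (so that Theorem \ref{T:B-strong-recovery} applies) — this uses property (iii) plus Bessel continuity to pass from $\|\cdot\|_\HH$-convergence of $x_n$ to $\ell^2$-convergence of the rows — and that the interchange $\sum_j\langle\mathcal{S}(w),g_j\rangle\,\mathcal{S}^*g_j=\mathcal{S}^*\big(\sum_j\langle\mathcal{S}(w),g_j\rangle g_j\big)$ is justified by continuity of $\mathcal{S}^*$ on the convergent partial sums. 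The linearity of $\mathcal{F}$ (hence of $\mathcal{S}$, as noted after Definition \ref{genmod}) is what makes $T_{\mathcal{S}^*\mathcal{G}}$ and $\mathcal{R}$ linear rather than merely Lipschitz, so I would flag where it is used. Everything else is a routine assembly of bounded operators.
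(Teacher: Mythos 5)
Your proof is correct and follows essentially the same route as the paper: necessity via the stationary initial state $x_0=\mathcal{S}(w)$ giving the lower frame bound (the upper bound being automatic from Besselness of $\mathcal{G}$ and boundedness of $\mathcal{S}$), and sufficiency via Theorem \ref{T:B-strong-recovery} applied to a dual-frame reconstruction. Your operator $\mathcal{R}=S_W^{-1}\circ\mathcal{S}^*\circ\mathcal{R}_{\mathcal{G}}$ is exactly the paper's $\mathcal{R}(D)=\lim D\widetilde{\mathcal{G}}$ specialized to the canonical dual frame $\widetilde{g_j}=S_W^{-1}\mathcal{S}^*g_j$, so the two arguments differ only in presentation.
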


As a special case of Theorem \ref {thmforsubspace-general}, we characterize stable reconstruction for the dynamical system \eqref{model} created with an operator $A$ with the spectral radius $\rho (A)<1$.

\begin{theorem}\label{thmforsubspace}
Let $\HH$ be a separable Hilbert space, let $W$ be a closed subspace of $\HH$,  and let $\mathcal{G}=\{g_j\}_{j\geq 1}$ be a Bessel sequence in $\HH$. Consider the dynamical system \eqref{model} with an arbitrary initial state $x_0\in \HH$, and with $\rho(A)<1$. Then each source term $w\in W$ of the system can be recovered from the measurements $\mathcal{D}(x_0,w) = [\langle x_{n}, g_j\rangle]_{n\ge 0, j\ge 1}$ in a stable way if and only if $\{P_W(I-A^*)^{-1}g_j\}_{j \geq 1}$ is a frame for $W$.
\end{theorem}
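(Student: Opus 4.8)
The plan is to derive Theorem~\ref{thmforsubspace} directly from Theorem~\ref{thmforsubspace-general}, by showing that when $\rho(A)<1$ the system \eqref{model} with sources confined to $W$ is an instance of a \emph{linear} system $(\HH,W,\mathcal{F},\mathcal{S})$ in the sense of Definition~\ref{genmod}, and then by computing the adjoint of its stationary mapping operator $\mathcal{S}$ explicitly.

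First I would verify the three defining properties (i)--(iii). Since $\rho(A)<1$, the Neumann series shows $I-A$ is invertible; putting $x_0(w):=(I-A)^{-1}w$ one checks $A\,x_0(w)+w=x_0(w)$, so $x_n=x_0(w)$ for all $n\ge 1$, which is the required unique stationary state and gives (i). The map $\mathcal{S}(w):=(I-A)^{-1}w$ is the restriction to $W$ of the bounded operator $(I-A)^{-1}\in\mathcal{B}(\HH)$, hence bounded and linear, giving (ii). For (iii), the explicit solution $x_n=A^nx_0+\sum_{k=0}^{n-1}A^kw=A^nx_0+(I-A^n)(I-A)^{-1}w$, together with $\|A^n\|\to 0$ (a consequence of $\rho(A)<1$ via the spectral radius formula), yields $x_n\to(I-A)^{-1}w=\mathcal{S}(w)$ in $\|\cdot\|_\HH$. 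This convergence, combined with the Bessel bound of $\mathcal{G}$, also shows that the rows of $\mathcal{D}(x_0,w)$ form a Cauchy sequence in $\ell^2$, so the data genuinely lies in $\mathcal{B}^{s}(\ell^2,\ell^\infty)$, consistent with the hypotheses of Theorem~\ref{thmforsubspace-general}.

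Next I would identify $\mathcal{S}^{*}\colon\HH\to W$. Since $(I-A)^{*}=I-A^{*}$ and $\rho(A^{*})=\rho(A)<1$ forces $1\notin\sigma(A^{*})$, we have $\big((I-A)^{-1}\big)^{*}=(I-A^{*})^{-1}$. Hence, for $w\in W$ and $h\in\HH$,
\[
\langle \mathcal{S}(w),h\rangle_{\HH}=\langle (I-A)^{-1}w,h\rangle_{\HH}=\langle w,(I-A^{*})^{-1}h\rangle_{\HH}=\langle w,P_W(I-A^{*})^{-1}h\rangle_{\HH},
\]
where the last step uses $w=P_W w$ and self-adjointness of $P_W$. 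Therefore $\mathcal{S}^{*}h=P_W(I-A^{*})^{-1}h$.

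Finally, Theorem~\ref{thmforsubspace-general} asserts that every $w\in W$ is stably recoverable from $\mathcal{D}(x_0,w)$ if and only if $\{\mathcal{S}^{*}g_j\}_{j\ge 1}$ is a frame for $W$; substituting the formula just obtained, this is precisely the statement that $\{P_W(I-A^{*})^{-1}g_j\}_{j\ge 1}$ is a frame for $W$, completing the proof. I do not expect a genuine obstacle here: the argument is essentially a verification plus an adjoint computation. The only point deserving care is the domain/codomain bookkeeping for $\mathcal{S}^{*}$ --- because $\mathcal{S}$ is defined on the possibly proper subspace $W$ rather than on all of $\HH$, its adjoint lands in $W$, and this is exactly what produces the orthogonal projection $P_W$ in the final frame condition.
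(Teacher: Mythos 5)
Your proposal is correct and follows essentially the same route as the paper: verify that \eqref{model} with $\rho(A)<1$ satisfies properties (i)--(iii) of Definition~\ref{genmod} with $\mathcal{S}=(I-A)^{-1}|_W$, then apply Theorem~\ref{thmforsubspace-general} after identifying $\mathcal{S}^{*}=P_W(I-A^{*})^{-1}$. Your explicit adjoint computation justifying the appearance of $P_W$ is a welcome bit of extra detail that the paper states without proof.
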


\section{Stable reconstruction and the proof of Theorem \ref{T:B-strong-recovery}} \label{S:proof-forsubspace-tech}

Before presenting the proof of Main Theorems, we need to develop the required mathematical background. In particular, our main goal is to provide a clear foundation for the notion of {\em stable reconstruction} which was exploited in the announcement of theorems. This important concept needs a detailed discussion, which is fulfilled in this section.
 
\begin{lemma} \label{L:normA-l2linfty}
Let $D$ be a bounded operator from the sequence Hilbert space $\ell^2$ to the sequence Banach algebra $\ell^\infty$. Let $[d_{ij}]$ be the matrix representation of $D$ with respect to the canonical basis $(e_n)_{n \geq 1}$, i.e.,
\[
De_j = \sum_{i=1}^{\infty} d_{ij} e_i, \qquad j \geq 1.
\]
Then
\[
\|D\|_{\ell^2 \to \ell^\infty} = \sup_{i \geq 1} \left( \sum_{j=1}^{\infty} |d_{ij}|^2 \right)^{1/2}.
\]
\end{lemma}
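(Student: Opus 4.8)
\textbf{Proof plan for Lemma \ref{L:normA-l2linfty}.}

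The plan is to establish the two inequalities separately, writing $M := \sup_{i \geq 1} \left( \sum_{j=1}^{\infty} |d_{ij}|^2 \right)^{1/2}$. The first observation to nail down is that each row $r_i = (d_{i1}, d_{i2}, \dots)$ actually lies in $\ell^2$, which follows because $r_i = (D^* e_i)$ under the formal adjoint pairing: for any finitely supported $x \in \ell^2$, $(Dx)_i = \sum_j d_{ij} x_j = \langle x, \overline{r_i} \rangle$, and since $|(Dx)_i| \leq \|Dx\|_{\ell^\infty} \leq \|D\| \, \|x\|_{\ell^2}$, the linear functional $x \mapsto \langle x, \overline{r_i}\rangle$ is bounded on the dense subspace of finitely supported sequences with norm at most $\|D\|$; hence $r_i \in \ell^2$ with $\|r_i\|_{\ell^2} \leq \|D\|_{\ell^2 \to \ell^\infty}$. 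Taking the supremum over $i$ gives $M \leq \|D\|_{\ell^2 \to \ell^\infty}$.

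For the reverse inequality, first I would justify that for a general $x \in \ell^2$ (not merely finitely supported) one still has $(Dx)_i = \sum_j d_{ij} x_j = \langle x, \overline{r_i}\rangle$. This needs a short continuity/density argument: truncations $x^{(N)}$ of $x$ converge to $x$ in $\ell^2$, so $Dx^{(N)} \to Dx$ in $\ell^\infty$, hence coordinatewise; meanwhile $\langle x^{(N)}, \overline{r_i}\rangle \to \langle x, \overline{r_i}\rangle$ by Cauchy--Schwarz since $r_i \in \ell^2$. Once this identity is in hand, Cauchy--Schwarz gives $|(Dx)_i| = |\langle x, \overline{r_i}\rangle| \leq \|r_i\|_{\ell^2} \|x\|_{\ell^2} \leq M \|x\|_{\ell^2}$ for every $i$, so $\|Dx\|_{\ell^\infty} \leq M \|x\|_{\ell^2}$, i.e. $\|D\|_{\ell^2 \to \ell^\infty} \leq M$. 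Combining the two bounds yields the claimed equality.

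There is essentially no serious obstacle here; the only point requiring a little care is the passage from finitely supported sequences to all of $\ell^2$ in the identification of $(Dx)_i$ with the inner product against the $i$-th row — one must confirm that coordinate evaluation is $\ell^\infty$-continuous (it is, trivially) and invoke density. Everything else is Cauchy--Schwarz and taking suprema. I would present the argument in the order: (1) rows lie in $\ell^2$ and $\|r_i\|_{\ell^2} \leq \|D\|$, giving $M \leq \|D\|$; (2) the row-inner-product identity for all $x \in \ell^2$; (3) Cauchy--Schwarz to get $\|D\| \leq M$.
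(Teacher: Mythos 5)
Your proof is correct and follows essentially the same route as the paper: Cauchy--Schwarz for $\|D\|_{\ell^2\to\ell^\infty}\leq M$, and pairing the $i$-th coordinate functional against the (conjugate of the) $i$-th row for the reverse inequality — your Riesz-representation step is just the abstract form of the paper's explicit test vector $x_j=|d_{ij}|^2/d_{ij}=\overline{d_{ij}}$. The only difference is that you supply the small justifications the paper elides (that each row lies in $\ell^2$ before it is used as a test vector, and that $(Dx)_i=\sum_j d_{ij}x_j$ holds for all $x\in\ell^2$, not just finitely supported ones), which is a welcome tightening rather than a new idea.
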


\begin{proof}
Let $x = (x_j)_{j \geq 1} \in \ell^2$. Then, by Cauchy--Schwartz inequality,
\begin{eqnarray*}
\|Dx\|_{\ell^\infty} 
&=& \sup_{i \geq 1} |(Dx)_i|= \sup_{i \geq 1} \left| \sum_{j=1}^{\infty} d_{ij}x_j \right|\leq \sup_{i \geq 1} \left( \sum_{j=1}^{\infty} |d_{ij}|^2 \right)^{1/2} \, \|x\|_{\ell^2}.\
\end{eqnarray*}
Hence,
\[
\|D\|_{\ell^2 \to \ell^\infty} \leq \sup_{i \geq 1} \left( \sum_{j=1}^{\infty} |d_{ij}|^2 \right)^{1/2}.
\]

For the reverse inequality, fix a row $i$ in $D$. Define the vector $x=(x_j)_{j \geq 1}$ by
\[
x_j := \left\{
\begin{array}{ccc}
|d_{ij}|^2/d_{ij} & \mbox{if} & d_{ij} \neq 0,\\
& & \\
0 & \mbox{if} & d_{ij}=0.\\
\end{array} \right.
\]
These coefficients are designed to have
\[
(Dx)_{i} = \sum_{j=1}^{\infty} d_{ij}x_j  = \sum_{j=1}^{\infty} |d_{ij}|^2,
\]
and, at the same time,
\[
\|x\|_{\ell^2} = \left( \sum_{j=1}^{\infty} |d_{ij}|^2 \right)^{1/2}.
\]
Thus,
\[
\frac{\|Dx\|_{\ell^\infty}}{\|x\|_{\ell^2}} 
\geq 
\frac{|(Dx)_{i}|}{\|x\|_{\ell^2}} 
= \left( \sum_{j=1}^{\infty} |d_{ij}|^2 \right)^{1/2},
\]
which gives
\[
\|D\|_{\ell^2 \to \ell^\infty} \geq  \left( \sum_{j=1}^{\infty} |d_{ij}|^2 \right)^{1/2}.
\]
\end{proof}

We denote the family of all bounded linear operators from $\ell^2$ to $\ell^\infty$ by $\mathcal{B}(\ell^2,\ell^\infty)$. Lemma \ref{L:normA-l2linfty} characterizes such operators as those whose rows, in the canonical matrix representation, are uniformly bounded in $\ell^2$. We define a closed subspace of $\mathcal{B}(\ell^2,\ell^\infty)$ and use it to tackle the question of stable reconstruction.  The precise definition is as follows. 

Let $\HH$ be a separable Hilbert space. Then $\mathcal{B}^{s}(\ell^2,\ell^\infty)$ (where $s$ stands for strong) consists of all operators $D \in \mathcal{B}(\ell^2,\ell^\infty)$ such that the limit
\begin{equation} \label{E:row-A-G}
\lim_{i \to \infty} \sum_{j=1}^{\infty} d_{ij} g_j    
\end{equation}
exists in the norm of $\HH$ for any Bessel sequence $\mathcal{G}=\{g_j\}_{j\geq 1} \subset \HH$. By the unitary equivalence of separable Hilbert spaces, the definition does not depend on $\HH$. Also note that since $\mathcal{G}$ is a Bessel sequence and each row of $D$ is in $\ell^2$, the sum in \eqref{E:row-A-G} is well-defined and represents an element of $\HH$. In the following, we will write $\lim D\mathcal{G}$ for the limit in \eqref{E:row-A-G}. More explicitly, whenever $D \in \mathcal{B}^{s}(\ell^2,\ell^\infty)$ and $\mathcal{G}$ is a Bessel sequence in $\HH$, we will write
\begin{equation} \label{E:row-A-G-2}
\lim D\mathcal{G} := \lim_{i \to \infty} \sum_{j=1}^{\infty} d_{ij} g_j   
\end{equation}

\begin{lemma} \label{L:B-strong}
Let $D = [d_{ij}] \in \mathcal{B}(\ell^2,\ell^\infty)$. Then $D \in \mathcal{B}^{s}(\ell^2,\ell^\infty)$ if and only if
the rows of $D$ are norm convergent in $\ell^2$, i.e., there is a vector $t \in \ell^2$ such that
\[
\lim_{i \to \infty} \|r_i - t\|_{\ell^2} = 0, 
\]
where $r_i := (d_{i1}, d_{i2},\dots)$ denotes the $i$-th row of $D$.
\end{lemma}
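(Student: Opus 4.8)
The plan is to prove both implications by relating norm-convergence of the rows $r_i$ in $\ell^2$ to convergence of $\sum_j d_{ij}g_j$ in $\HH$ for \emph{every} Bessel sequence $\mathcal G$. By the unitary equivalence of separable Hilbert spaces noted above, it suffices to argue in a fixed infinite-dimensional model, say $\HH = \ell^2$ itself.

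\emph{Sufficiency.} Suppose the rows $r_i$ converge in $\ell^2$ to some $t \in \ell^2$. Fix a Bessel sequence $\mathcal G = \{g_j\}_{j \geq 1}$ in $\HH$ with Bessel bound $C$; let $T\colon \ell^2 \to \HH$ be the associated synthesis operator $T(c) = \sum_j c_j g_j$, which is bounded with $\|T\| \leq \sqrt C$. Then $\sum_{j} d_{ij} g_j = T(r_i)$, and since $r_i \to t$ in $\ell^2$ and $T$ is continuous, $T(r_i) \to T(t)$ in $\HH$. Hence $\lim_{i\to\infty}\sum_j d_{ij} g_j$ exists, so $D \in \mathcal B^s(\ell^2,\ell^\infty)$.

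\emph{Necessity.} This is the direction requiring care, and it is where the main obstacle lies: we only know that $\lim D\mathcal G$ exists \emph{in $\HH$}, and we must extract from this norm-convergence of the rows in the coefficient space $\ell^2$. The natural move is to pick the Bessel sequence $\mathcal G$ cleverly: take $\mathcal G = \{e_j\}_{j \geq 1}$ an orthonormal basis of $\HH$ (a Bessel sequence with bound $1$), for which the synthesis operator $T$ is a unitary identification of $\ell^2$ with $\HH$. Then $\sum_j d_{ij} e_j = T(r_i)$ and convergence of $T(r_i)$ in $\HH$ is equivalent, via the isometry $T$, to convergence of $r_i$ in $\ell^2$. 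Since $D \in \mathcal B^s(\ell^2,\ell^\infty)$ requires \eqref{E:row-A-G} to converge for \emph{any} Bessel sequence, in particular for this orthonormal basis, we conclude that $(r_i)$ is a norm-convergent sequence in $\ell^2$, i.e.\ there is $t \in \ell^2$ with $\|r_i - t\|_{\ell^2} \to 0$.

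The only subtlety to address is that the ambient $\HH$ in the statement may be any separable Hilbert space, possibly finite-dimensional, whereas the rows live in the fixed space $\ell^2$; but if $\HH$ is finite-dimensional the membership $D \in \mathcal B(\ell^2,\ell^\infty)$ together with finitely many nonzero entries per "active" direction makes the claim immediate, and in the infinite-dimensional case the unitary equivalence used in the definition of $\mathcal B^s(\ell^2,\ell^\infty)$ lets us transport an orthonormal basis of $\HH$ to play the role of $\{e_j\}$ above. No further estimates are needed: the whole argument reduces to the observation that the synthesis operator of an orthonormal basis is a Hilbert space isomorphism, so that convergence upstairs and downstairs coincide.
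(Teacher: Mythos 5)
Your proof is correct and follows essentially the same route as the paper: sufficiency via the boundedness of the synthesis operator (the paper writes this out as the estimate $\|\sum_j (t_j - d_{ij})g_j\|_{\HH} \le \sqrt{C_{\mathcal G}}\,\|t - r_i\|_{\ell^2}$), and necessity by testing against an orthonormal basis, for which synthesis is unitary. The closing aside about finite-dimensional $\HH$ is unnecessary (the paper works throughout with an infinite-dimensional separable $\HH$, which is what makes the orthonormal-basis test available), and in fact the "immediate" claim there is not right as stated; but this does not affect the argument in the intended setting.
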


\begin{proof}
Assume that $D \in \mathcal{B}^{s}(\ell^2,\ell^\infty)$. If we consider even a single sequence $\mathcal{G}$ which is an orthonormal basis in $\HH$, then the series in \eqref{E:row-A-G} is unitarily equivalent to the vector $r_i := (d_{i1}, d_{i2},\dots) \in \ell^2$. Thus, the assumption on the existence of a limit precisely means that the rows are convergent in $\ell^2$. 

Conversely, let $D \in \mathcal{B}(\ell^2,\ell^\infty)$, assume that its rows are convergent in $\ell^2$ norm to a vector $t=(t_1,t_2,\dots)\in\ell^2$, and fix an arbitrary Bessel sequence $\mathcal{G}=\{g_j\}_{j\geq 1}\subset\HH$. Then,
\[
h := \sum_{j=1}^{\infty} t_{j} g_j 
\]
is a well-defined element of $\HH$. Moreover,
\begin{eqnarray*}
\left\| h - \sum_{j=1}^{\infty} d_{ij} g_j \right\|_{\HH} 
&=& \left\| \sum_{j=1}^{\infty} t_{j} g_j - \sum_{j=1}^{\infty} d_{ij} g_j \right\|_{\HH}\\
&=& \left\| \sum_{j=1}^{\infty} (t_{j} - d_{ij}) g_j \right\|_{\HH}\\
&\leq& \sqrt{C_{\mathcal{G}}} \, \|t-r_i\|_{\ell^2},
\end{eqnarray*}
where $C_{\mathcal{G}}$ is the optimal bound of the Bessel sequence $\mathcal{G}$ (see, for e.g. \cite[Thm. 3.2.3]{christensen2003introduction}). By assumption, $\|t-r_i\|_{\ell^2} \to 0$, as $i \to \infty$. Hence, 
\[
\lim_{i \to \infty} \sum_{j=1}^{\infty} d_{ij} g_j = h,
\]
which means that $D \in \mathcal{B}^{s}(\ell^2,\ell^\infty)$.
\end{proof}

\begin{corollary} \label{C:B-strong}
$\mathcal{B}^{s}(\ell^2,\ell^\infty)$ is a closed subspace of $\mathcal{B}(\ell^2,\ell^\infty)$.
\end{corollary}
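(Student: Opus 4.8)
The plan is to lean entirely on the intrinsic description provided by Lemma \ref{L:B-strong}: that result says $\mathcal{B}^{s}(\ell^2,\ell^\infty)$ is exactly the set of matrices $D\in\mathcal{B}(\ell^2,\ell^\infty)$ whose rows $(r_i)_{i\ge 1}$ are norm-convergent in $\ell^2$. Together with Lemma \ref{L:normA-l2linfty}, which identifies $\|D\|_{\ell^2\to\ell^\infty}$ with $\sup_{i\ge 1}\|r_i\|_{\ell^2}$, this turns the statement into the $\ell^2$-valued version of the classical fact that the space $c$ of convergent scalar sequences is a closed subspace of $\ell^\infty$. Concretely, the map sending $D$ to the sequence of its rows is an isometric isomorphism of $\mathcal{B}(\ell^2,\ell^\infty)$ onto $\ell^\infty(\ell^2)$ (bounded $\ell^2$-valued sequences, sup norm), carrying $\mathcal{B}^{s}(\ell^2,\ell^\infty)$ onto the subspace $c(\ell^2)$ of convergent $\ell^2$-valued sequences; so it suffices to show $c(\ell^2)$ is closed in $\ell^\infty(\ell^2)$.

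The linear-subspace part is immediate: if the rows of $D$ converge to $t$ and the rows of $D'$ converge to $t'$ in $\ell^2$, then for scalars $\alpha,\beta$ the rows of $\alpha D+\beta D'$ converge to $\alpha t+\beta t'$ by the triangle inequality, and $\alpha D+\beta D'$ remains in $\mathcal{B}(\ell^2,\ell^\infty)$. For closedness, I would take $D^{(k)}\in\mathcal{B}^{s}(\ell^2,\ell^\infty)$ with $D^{(k)}\to D$ in $\mathcal{B}(\ell^2,\ell^\infty)$ and let $t^{(k)}\in\ell^2$ denote the row-limit of $D^{(k)}$. The crucial observation is that $\|t^{(k)}-t^{(m)}\|_{\ell^2}\le\|D^{(k)}-D^{(m)}\|_{\ell^2\to\ell^\infty}$: insert the $i$-th rows via $\|t^{(k)}-t^{(m)}\|_{\ell^2}\le\|t^{(k)}-r_i^{(k)}\|_{\ell^2}+\|r_i^{(k)}-r_i^{(m)}\|_{\ell^2}+\|r_i^{(m)}-t^{(m)}\|_{\ell^2}$, bound the middle term by $\|D^{(k)}-D^{(m)}\|_{\ell^2\to\ell^\infty}$ using Lemma \ref{L:normA-l2linfty}, and let $i\to\infty$ so the outer terms vanish. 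Hence $(t^{(k)})$ is Cauchy in the complete space $\ell^2$, so $t^{(k)}\to t$ for some $t\in\ell^2$. A routine $\varepsilon/3$ estimate $\|r_i-t\|_{\ell^2}\le\|r_i-r_i^{(k)}\|_{\ell^2}+\|r_i^{(k)}-t^{(k)}\|_{\ell^2}+\|t^{(k)}-t\|_{\ell^2}$ then finishes it: choose $k$ so the first and third terms are $<\varepsilon/3$ uniformly in $i$ (the first because $\sup_i\|r_i-r_i^{(k)}\|_{\ell^2}=\|D-D^{(k)}\|_{\ell^2\to\ell^\infty}$), then $i$ large enough that the middle term is $<\varepsilon/3$; so the rows of $D$ converge to $t$ and $D\in\mathcal{B}^{s}(\ell^2,\ell^\infty)$.

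I do not anticipate a genuine obstacle: the argument is soft once Lemma \ref{L:B-strong} is in hand, and the one point that needs care is that the uniform-in-$i$ control of rows — which is exactly what Lemma \ref{L:normA-l2linfty} delivers by making the operator norm the supremum of the row norms — is what allows the convergence of the auxiliary limits $t^{(k)}$ to be transferred back to the rows of $D$. Completeness of $\ell^2$, used only to produce the limit $t$, is the sole external ingredient.
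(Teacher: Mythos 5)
Your argument is correct and follows essentially the same route as the paper: both proofs reduce the statement, via Lemma \ref{L:B-strong} and the row-norm description of $\|\cdot\|_{\ell^2\to\ell^\infty}$ from Lemma \ref{L:normA-l2linfty}, to the classical fact that convergent sequences form a closed subspace of the bounded ones, using a three-term estimate with the uniform-in-$i$ control $\sup_i\|r_i-r_i^{(k)}\|_{\ell^2}=\|D-D^{(k)}\|_{\ell^2\to\ell^\infty}$ and completeness of $\ell^2$. The only cosmetic difference is that the paper shows directly that the rows of $D$ are Cauchy, whereas you first extract the limit $t$ as the limit of the auxiliary row-limits $t^{(k)}$ and then transfer convergence back to the rows; both variants are standard and equally valid.
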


\begin{proof}
Let $D^{(n)}$, $n \geq 1$, be a sequence in $\mathcal{B}^s(\ell^2,\ell^\infty)$ which converges to $D \in \mathcal{B}(\ell^2,\ell^\infty)$ in the topology of $\mathcal{B}(\ell^2,\ell^\infty)$. Denote the $i$-th rows of $D^{(n)}$ and $D$ respectively by $r_{i}^{(n)}$ and $r_{i}$. Write
\[
r_{i} - r_{i'} =  (r_{i}-r_{i}^{(n)}) +  (r_{i'}^{(n)}-r_{i'}) + (r_{i}^{(n)}-r_{i'}^{(n)}).
\]
For the first two terms on the right side we have
\[
\|r_{i}-r_{i}^{(n)}\|_{\ell^2}  \leq \|D-D^{(n)}\|_{\ell^2 \to \ell^\infty},
\]
and 
\[
\|r_{i'}-r_{i'}^{(n)}\|_{\ell^2}  \leq \|D-D^{(n)}\|_{\ell^2 \to \ell^\infty},
\]
Therefore, given $\varepsilon>0$, we take $n$ so large that $\|D-D^{(n)}\|_{\ell^2 \to \ell^\infty} \leq \varepsilon$. Hence,
\[
\|r_{i} - r_{i'}\|_{\ell^2} \leq  2\varepsilon + \|r_{i}^{(n)}-r_{i'}^{(n)}\|_{\ell^2}.
\]
Now, by Lemma \ref{L:B-strong} applied to $D^{(n)}$, there is and index $i_0$ such that
\[
\|r_{i}^{(n)}-r_{i'}^{(n)}\|_{\ell^2} < \varepsilon, \qquad i,i' > i_0.
\]
Therefore, for all $i,i' > i_0$, we will have
\[
\|r_{i} - r_{i'}\|_{\ell^2} \leq 3 \varepsilon. 
\]
This means that the rows of $D$ are norm Cauchy, and thus norm convergent. Hence, one again using Lemma \ref{L:B-strong}, we conclude that $D \in \mathcal{B}^{s}(\ell^2,\ell^\infty)$.
\end{proof}

All the previous results were designed to arrive at Theorem \ref{T:B-strong-recovery} which is a fundamental result. We are now able to prove this theorem.

\subsection{Proof of Theorem \ref{T:B-strong-recovery}}
Fix any $D \in \mathcal{B}^{s}(\ell^2,\ell^\infty)$. Let $r_i := (d_{i1}, d_{i2},\dots)$ denotes the $i$-th row of $D$. Define
\[
(D\mathcal{G})_{i} := \sum_{j=1}^{\infty} d_{ij} g_j, \qquad i \geq 1.
\]
Then,
\[
\|(D\mathcal{G})_{i}\|_{\HH} 
= \left\|\sum_{j=1}^{\infty} d_{ij} g_j\right\|_{\HH} 
\leq \sqrt{C_{\mathcal{G}}} \|r_i\|_{\ell^2}
\leq \sqrt{C_{\mathcal{G}}} \|D\|_{\ell^2 \to \ell^\infty}, \qquad i \geq 1.
\]
Let $i \to \infty$ to obtain
\[
\|\mathcal{R}_{\mathcal{G}}(D)\|_{\HH} = \|\lim D\mathcal{G}\|_{\HH} \leq \sqrt{C_{\mathcal{G}}} \|D\|_{\ell^2 \to \ell^\infty}.
\]
Therefore, the operator $\mathcal{R}_{\mathcal{G}}$ is bounded and
\[
\|\mathcal{R}_{\mathcal{G}}\|_{\mathcal{B}^{s}(\ell^2,\ell^\infty) \to \HH} \leq \sqrt{C_{\mathcal{G}}}.
\]

To prove the reverse inequality, let $D$ be the operator in $\mathcal{B}^{s}(\ell^2,\ell^\infty)$ whose rows are all equal to a fixed vector
\[
d = (d_1,d_2,\dots) \in \ell^2.
\]
Then clearly
\[
\|D\|_{\ell^2 \to \ell^\infty} = \|d\|_{\ell^2}, 
\]
and
\[
\mathcal{R}_{\mathcal{G}}(D) = \lim D\mathcal{G} = \sum_{j=1}^{\infty} d_{j} g_{j}.
\]
Hence, the inequality
\[
\|\mathcal{R}_{\mathcal{G}}(D)\|_{\HH} \leq \|\mathcal{R}_{\mathcal{G}}\|_{\mathcal{B}^{s}(\ell^2,\ell^\infty) \to \HH} \, \|D\|_{\ell^2 \to \ell^\infty}
\]
transforms to
\[
\left\| \sum_{j=1}^{\infty} d_{j} g_{j} \right\|_{\HH} \leq \|\mathcal{R}_{\mathcal{G}}\|_{\mathcal{B}^{s}(\ell^2,\ell^\infty) \to \HH} \, \|d\|_{\ell^2}, \qquad d \in \ell^2.
\]
But, since $d \in \ell^2$ is arbitrary, this estimation implies
\[
\sqrt{C_{\mathcal{G}}} \leq \|\mathcal{R}_{\mathcal{G}}\|_{\mathcal{B}^{s}(\ell^2,\ell^\infty) \to \HH}.
\]
\qed

\section{Proofs of theorems for finite time iterations} \label{S:proofs-part1}

\begin{lemma}
    Consider the dynamical system \eqref{model} with any initial state $x_0 \in \HH$. Given a Bessel sequence $\mathcal{G}=\{g_j\}_{j\geq 1}\subset\HH$ and $1\leq N<\infty$, then the data matrix $\mathcal{D}(x_0,w) = [\langle x_{n}, g_j\rangle]_{n\in [N], \, j\geq 1}$ belongs to $\mathcal{B}(\ell^2,\C^N)$.
\end{lemma}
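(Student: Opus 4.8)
The plan is to check the two defining features of membership in $\mathcal{B}(\ell^2,\C^N)$ as described after \eqref{E:Norm-N}: the matrix $\mathcal{D}(x_0,w)$ has exactly $N$ rows, and each of these rows lies in $\ell^2$. The first is immediate since the row index $n$ ranges over $[N]=\{0,1,\dots,N-1\}$, a finite set of cardinality $N$. For the second, fix $n\in[N]$ and consider the $n$-th row $r_n=(\langle x_n,g_1\rangle,\langle x_n,g_2\rangle,\dots)$. The key input is that $\mathcal{G}$ is a Bessel sequence: there is a constant $C_{\mathcal{G}}<\infty$ with $\sum_{j\geq 1}|\langle h,g_j\rangle|^2\leq C_{\mathcal{G}}\|h\|_{\HH}^2$ for every $h\in\HH$. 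Applying this with $h=x_n$ gives $\|r_n\|_{\ell^2}^2\leq C_{\mathcal{G}}\|x_n\|_{\HH}^2<\infty$, so $r_n\in\ell^2$.

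The only remaining point is to confirm $\|x_n\|_{\HH}<\infty$, which follows from the closed form obtained by unrolling the recursion \eqref{model}: $x_n=A^n x_0+\sum_{k=0}^{n-1}A^k w$, and since $A\in\mathcal{B}(\HH)$ and $x_0,w\in\HH$, the triangle inequality yields $\|x_n\|_{\HH}\leq\|A\|^n\|x_0\|_{\HH}+\sum_{k=0}^{n-1}\|A\|^k\|w\|_{\HH}<\infty$. (This also makes sense even for the generalized model, where one simply uses that $x_n\in\HH$ by hypothesis.) Combining, $\mathcal{D}(x_0,w)$ is an infinite matrix with $N$ rows, each in $\ell^2$, hence $\mathcal{D}(x_0,w)\in\mathcal{B}(\ell^2,\C^N)$, and in fact $\|\mathcal{D}(x_0,w)\|_{\ell^2\to\C^N}=\sum_{n=0}^{N-1}\|r_n\|_{\ell^2}\leq\sqrt{C_{\mathcal{G}}}\sum_{n=0}^{N-1}\|x_n\|_{\HH}<\infty$.

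There is essentially no hard step here; the statement is a sanity check that the data lands in the declared measurement space. The one thing worth stating carefully is the Bessel estimate $\sum_j|\langle x_n,g_j\rangle|^2\leq C_{\mathcal{G}}\|x_n\|_{\HH}^2$, which is exactly what guarantees the rows are square-summable, together with the elementary fact that any infinite matrix with finitely many rows, each belonging to $\ell^2$, represents a bounded operator from $\ell^2$ to $\C^N$ (clear from the norm formula \eqref{E:Norm-N} and the equivalence of norms on $\C^N$).
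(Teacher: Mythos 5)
Your proposal is correct and follows essentially the same route as the paper: the Bessel bound $\sum_j|\langle x_n,g_j\rangle|^2\leq C_{\mathcal{G}}\|x_n\|_{\HH}^2$ shows each of the $N$ rows lies in $\ell^2$, so the norm \eqref{E:Norm-N} is finite. Your added remarks (unrolling the recursion to confirm $x_n\in\HH$, and keeping the square root that the paper's displayed estimate drops) are harmless extra care, not a different argument.
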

\begin{proof}
The proof holds since $\mathcal{G}=\{g_j\}_{j\geq 1}\subset\HH$ is a Bessel sequence. Indeed, if $C_\mathcal{G}$ is the optimal Bessel bound, then
\begin{align*}
    \|\mathcal{D}(x_0,w)\|_{\ell^2\to\C^N}=\sum_{n=0}^{N-1}\sum_{j=1}^\infty|\langle x_n,g_j\rangle|^2\leq C_{\mathcal{G}}\sum_{n=0}^{N-1}\|x_n\|_{\HH}^2<\infty.
\end{align*}
\end{proof}

\begin{proposition}
Consider the dynamical system \eqref{model}, and let $1\leq N<\infty$. Given a frame $\mathcal{G}=\{g_j\}_{j\geq 1}$ for $\HH$, consider the {data operator} $\mathcal{D}$ defined by 
    \begin{gather*}
       \mathcal{D}: \HH\times \HH\longrightarrow\mathcal{B}(\ell^2,\C^N)\notag \\
       \mathcal{D}(x_0,w):=[\langle x_n,g_j\rangle]_{n\in [N], j\geq 1}.
    \end{gather*}
Then the image of $\mathcal{D}$ is a closed subspace in $\mathcal{B}(\ell^2,\C^N)$.
\end{proposition}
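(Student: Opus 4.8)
The plan is to obtain closedness of the range of $\mathcal D$ from the observation that $\mathcal D$ is a \emph{bounded below} linear operator, together with the standard fact that a bounded linear map between Banach spaces admitting a lower bound has closed range (and the range of a linear map is automatically a subspace, so only closedness is at stake). The degenerate case $N=1$, in which the source $w$ is invisible in the data, is peeled off and handled directly. Throughout, equip $\HH\times\HH$ with the norm $\|(x_0,w)\|:=\|x_0\|_\HH+\|w\|_\HH$, and let $C_{\mathcal G}\colon\HH\to\ell^2$, $C_{\mathcal G}h:=(\langle h,g_j\rangle)_{j\ge1}$, be the analysis operator of $\mathcal G$; since $\mathcal G$ is a frame for $\HH$ with lower bound $A_{\mathcal G}>0$, we have $\|C_{\mathcal G}h\|_{\ell^2}\ge\sqrt{A_{\mathcal G}}\,\|h\|_\HH$ for every $h\in\HH$.

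First I would iterate \eqref{model} to record $x_n=A^nx_0+\bigl(\sum_{k=0}^{n-1}A^k\bigr)w$; in particular $x_1=Ax_0+w$, equivalently $w=x_1-Ax_0$, and the $n$-th row of $\mathcal D(x_0,w)$ is precisely $C_{\mathcal G}x_n$. For $N\ge2$, using the norm formula \eqref{E:Norm-N} and then keeping only the first two rows,
\[
\|\mathcal D(x_0,w)\|_{\ell^2\to\C^N}=\sum_{n=0}^{N-1}\|C_{\mathcal G}x_n\|_{\ell^2}\ \ge\ \|C_{\mathcal G}x_0\|_{\ell^2}+\|C_{\mathcal G}x_1\|_{\ell^2}\ \ge\ \sqrt{A_{\mathcal G}}\,\bigl(\|x_0\|_\HH+\|x_1\|_\HH\bigr).
\]
Since $\|w\|_\HH=\|x_1-Ax_0\|_\HH\le\|x_1\|_\HH+\|A\|\,\|x_0\|_\HH$, this gives
\[
\|x_0\|_\HH+\|w\|_\HH\ \le\ (1+\|A\|)\bigl(\|x_0\|_\HH+\|x_1\|_\HH\bigr)\ \le\ \frac{1+\|A\|}{\sqrt{A_{\mathcal G}}}\,\|\mathcal D(x_0,w)\|_{\ell^2\to\C^N},
\]
so $\mathcal D$ is bounded below. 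The closed-range conclusion is then routine: if $\mathcal D(z_k)\to y$ in $\mathcal B(\ell^2,\C^N)$, then $(\mathcal D(z_k))_k$ is Cauchy, hence so is $(z_k)_k$ by the lower bound; by completeness of $\HH\times\HH$, $z_k\to z$, and by continuity of $\mathcal D$ (immediate from the Bessel property of $\mathcal G$ together with $\|x_n\|_\HH\le\kappa_N(\|x_0\|_\HH+\|w\|_\HH)$ for a constant $\kappa_N$ depending only on $N$ and $\|A\|$) we get $y=\lim_k\mathcal D(z_k)=\mathcal D(z)$, so $y$ lies in the range of $\mathcal D$. For $N=1$ the data is simply $\mathcal D(x_0,w)=C_{\mathcal G}x_0$, independent of $w$, so the range of $\mathcal D$ equals the range of $C_{\mathcal G}$, which is closed precisely because the frame $\mathcal G$ makes $C_{\mathcal G}$ bounded below.

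The only real subtlety is that $\mathcal D$ is not injective in general (it annihilates the $w$-direction when $N=1$), so the slogan ``bounded below implies closed range'' must be invoked with care; this is exactly why the case $N=1$ is separated out. A minor bookkeeping point is the conversion between the product norm on $\HH\times\HH$ and the norm \eqref{E:Norm-N} on $\mathcal B(\ell^2,\C^N)$, but since all norms on $\C^N$ are equivalent this only affects constants. Alternatively, one could factor $\mathcal D=\Psi\circ\Phi$ with $\Phi(x_0,w)=(x_0,\dots,x_{N-1})\in\HH^N$ and $\Psi$ applying $C_{\mathcal G}$ in each coordinate: here $\Psi$ is bounded below and hence carries closed subspaces to closed subspaces, and $\Phi$ has closed range because $(x_0,w)\mapsto(x_0,x_1)$ is invertible; but this reorganizes rather than eliminates the $N=1$ bookkeeping.
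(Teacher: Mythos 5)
Your proof is correct, and it reorganizes the argument in a way that differs from the paper's. The paper proceeds sequentially: given $\mathcal D(x_0^k,w^k)\to r$, it uses the closedness of the range of the analysis operator $\mathcal A_{\mathcal G}$ together with the lower frame bound to identify $r_0=\mathcal A_{\mathcal G}(x_0)$ and deduce $x_0^k\to x_0$, then repeats the argument on the second row to extract $w^k\to w$, and finally checks that every remaining row of $r$ equals $\mathcal A_{\mathcal G}(x_n)$. You instead compress the same mechanism (the frame lower bound applied to the first two rows, plus the bounded bijection $(x_0,w)\leftrightarrow(x_0,x_1)$ via $w=x_1-Ax_0$) into a single quantitative estimate $\|x_0\|_\HH+\|w\|_\HH\le\frac{1+\|A\|}{\sqrt{A_{\mathcal G}}}\|\mathcal D(x_0,w)\|_{\ell^2\to\C^N}$ and invoke the standard fact that a bounded, bounded-below linear operator between Banach spaces has closed range. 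What your version buys is an explicit stability constant and a shorter verification (no need to track rows $2,\dots,N-1$); what the paper's version buys is that it exhibits the limiting pair $(x_0,w)$ concretely, which foreshadows the reconstruction arguments used later. Your explicit treatment of $N=1$ is a welcome refinement: the paper's proof tacitly uses the row $r_1$, and for $N=1$ one must instead observe, as you do, that the image is just the range of the analysis operator. One small bookkeeping remark: the displayed computation in the paper's preceding lemma omits the square root present in the definition \eqref{E:Norm-N}; your use of $\sum_{n}\|C_{\mathcal G}x_n\|_{\ell^2}$ is the one consistent with \eqref{E:Norm-N}, and in any case the discrepancy only affects constants.
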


\begin{proof}
Consider a sequence $\{(x_0^k,w^k)\}_{k\in\N}$ in $\HH\times \HH$ such that the sequence $\{\mathcal{D}(x_0^k,w^k)\}_{k\in \N}$ converges in $\mathcal{B}(\ell^2,\C^N)$. Let $r\in \mathcal{B}(\ell^2,\C^N)$ be such limit 
$$
\lim_{k\to\infty}\mathcal{D}(x_0^k,w^k)=r.
$$
We recall that, by definition, $r$ is of the form  $r:=(r_0,r_1,\dots, r_{N-1})$ where, for each $0\leq n\leq N-1$,  $r_n:=(r_{n1},r_{n2},\dots)\in\ell^2$.
    
Since $\mathcal{G}=\{g_j\}_{j\geq 1}$ is a frame for $\HH$, the image of the analysis operator   
\begin{gather*}
\mathcal{A}_{\mathcal{G}}:\HH\longrightarrow\ell^2\\
\mathcal{A}_{\mathcal{G}}(h):=(\langle h,g_j\rangle)_{j\geq 1}
\end{gather*}
is a closed linear subspace in $\ell^2$. Notice that 
\begin{equation} \label{E:tmp-ro-limak}
r_0=\lim_{k\to\infty}\mathcal{A}_{\mathcal{G}}(x_0^k)
\end{equation}
where the above limit is in $\ell^2$-norm. Since the image of $\mathcal{A}_\mathcal{G}$ is closed,  there exists $x_0\in \HH$ such that
\[
r_0=\mathcal{A}_\mathcal{G}(x_0)= \big( \langle x_0, g_1 \rangle, \langle x_0, g_2 \rangle,\dots \big).
\]
Since $\mathcal{G}$ is a frame with optimal lower and upper bounds $c_{\mathcal{G}}$ and $C_\mathcal{G}$, we obtain that
\[
\|x_0^k-x_0\|_\HH^2 
\leq \frac{1}{c_{\mathcal{G}}}\sum_{j=1}^\infty|\langle x_0^k-x_0, g_j\rangle|^2
= \frac{1}{c_{\mathcal{G}}}\|\mathcal{A}_\mathcal{G}(x_0^k)-\underbrace{\mathcal{A}_\mathcal{G}(x_0)}_{r_0}\|_{\ell^2}^2,
\]
and thus, by \eqref{E:tmp-ro-limak},
\begin{equation}\label{eq: conv in H}
\lim_{k\to \infty}\|x_0^k-x_0\|_\HH^2
\leq \lim_{k\to \infty}\frac{1}{c_{\mathcal{G}}}\|\mathcal{A}_\mathcal{G}(x_0^k)-r_0\|_{\ell^2}^2= 0.
\end{equation}
This means that $\{x_0^k\}_{k\in \N}$ converges to $x_0$ in $\HH$. 

Similarly,
\begin{eqnarray*}
r_1=\lim_{k\to\infty}\mathcal{A}_{\mathcal{G}}(Ax_0^k+w^k)=\mathcal{A}_{\mathcal{G}}(Ax_0)+\lim_{k\to\infty}\mathcal{A}_{\mathcal{G}}(w^k).
\end{eqnarray*}
In particular $\{\mathcal{A}_{\mathcal{G}}(w^k)\}_{k\in \N}$ converges in $\ell^2$, and since the image of   $\mathcal{A}_{\mathcal{G}}$ is closed in $\ell^2$, there exists $w\in \HH$ such that
\[
\lim_{k\to \infty}\|\mathcal{A}_{\mathcal{G}}(w^k)-\mathcal{A}_{\mathcal{G}}(w)\|_{\ell^2}^2=0.
\]
By repeating the argument given in \eqref{eq: conv in H} we see that  $\{w^k\}_{k\in \N}$ converges to $w$ in $\HH$. Thus,
\[
r_1=\mathcal{A}_\mathcal{G}(Ax_0+w).
\]
Finally, as the $n$-th state of the dynamical system \eqref{model} with initial state $x_0^k\in \HH$ and source $w^k\in \HH$ can be written as
\[
x_{n}^k = A^{n}x_{0}^k+(I+A+\cdots+A^{n-1})w^k, \qquad n \geq 1,
\]
and, analogously, the 
$n$-th state of the dynamical system \eqref{model} with initial state $x_0\in \HH$ and source $w\in \HH$ can be written as
\[
x_{n} = A^{n}x_{0}+(I+A+\cdots+A^{n-1})w, \qquad n \geq 1,  
\]
we have 
\begin{eqnarray*}
r_n
&=& \lim_{k\to \infty}\mathcal{A}_\mathcal{G}(x_n^k)=\lim_{k\to \infty}\mathcal{A}_\mathcal{G}\left(A^{n}x_{0}^k+(I+A+\cdots+A^{n-1})w^k\right)\\
&=& \mathcal{A}_\mathcal{G}\left(A^{n}x_{0}+(I+A+\cdots+A^{n-1})w\right)=  \mathcal{A}_\mathcal{G}(x_n).
\end{eqnarray*}
This implies that 
\[
\lim_{k\to\infty}\mathcal{D}(x_0^k,w^k)=r=\mathcal{D}(x_0,w),
\]
that is,  $r$ belongs to the image of the data operator $\mathcal{D}$.
\end{proof}

\subsection{Proof of Theorem \ref{thm1}} 
Consider the dynamical system \eqref{model}, and suppose that a stable recovery is possible in $N$ time of iterations. More explicitly, for any $x_0\in\HH$ and any $w\in\HH$, the source $w$ can be recovered by applying a bounded linear operator $\mathcal{R}:\mathcal{B}(\ell^2,\C^N)\rightarrow \HH$ to the measurements $\mathcal{ D}(x_0,w)=\left[\langle x_n,g_j\rangle\right]_{n\in [N], \, j\geq 1}$, where $\mathcal{G}=\{g_j\}_{j\geq 1}$ is a Bessel sequence in $\HH$. Hence, there exists a positive constant  $C$ such that
\begin{equation}\label{bdd}
\left\| \mathcal{R}(D) \right\|_{\HH}^2 \le C \, \sum_{n=0}^{N-1}\sum\limits_{j= 1}^\infty|d_{nj}|^2, \qquad D\in \mathcal{B}(\ell^2,\ell^\infty),
\end{equation} 
 and 
\begin{equation} \label{bdd-2}
\mathcal{R}(\mathcal{D}(x_0,w))=w, \qquad  x_0, w\in \HH.
\end{equation}
Our objective is to show that $\mathcal{G}=\{g_j\}_{j\geq 1}$ is a frame for $\HH$.

According to \eqref{bdd} and \eqref{bdd-2}, we have
\begin{eqnarray}\label{eq: w lower bound}
\|w\|_\HH^2 
&\leq& C \left(\sum_{j=1}^\infty |\langle x_{0}, g_j\rangle|^2+\cdots+\sum_{j=1}^\infty |\langle x_{N-1}, g_j\rangle|^2 \right). 
\end{eqnarray}
Hence, by {\eqref{model}}, we obtain
\begin{equation}\label{bddforw}
\|w\|_\HH^2 \leq C \left(\sum_{j=1}^\infty\sum_{n=0}^{N-1} \left| \left\langle A^nx_0+\sum_{k=0}^{n-1}A^{k} w, g_j \right\rangle \right|^2 \right), 
\end{equation}
where if $n=0$, we understand $A^nx_0+\sum_{k=0}^{n-1}A^{k}$ simply as $x_0$.

Next, for $x_0\in \HH$, we choose $w=(I-A)x_0$, and we substitute it into \eqref{bddforw}. After some simplifications, the above relation is rewritten as 
\[
\|(I-A)x_0\|_\HH^2 \le NC \sum_{j=1}^\infty |\langle x_0, g_j\rangle|^2, \qquad  x_0\in\HH.
\]
Repeated use of this estimation implies
\begin{eqnarray}
\|(I-A)^{k} x_0\|_\HH^2 
&\leq& NC \sum_{j=1}^\infty |\langle (I-A)^{k-1}x_0, g_j\rangle|^2 \notag\\
&\leq& NCC_\mathcal{G}\|(I-A)^{k-1}x_0\|_\HH^2 \notag\\
&\leq& N^2C{^2}C_\mathcal{G}\sum_{j=1}^\infty |\langle (I-A)^{k-2}x_0, g_j\rangle|^2 \notag\\
&\vdots& \notag\\
&\leq& C_k \sum_{j=1}^\infty |\langle x_0, g_j\rangle|^2, \qquad\qquad x_0 \in \HH, \label{IAbound}
\end{eqnarray}
where we have used the fact that $\mathcal{G}= \{g_j\}_{j\geq 1}$ is a Bessel sequence in $\HH$ with optimal bound $C_{\mathcal{G}}$, and where the constant $C_k =N^kC{^k}C_\mathcal{G}^{k-1}$ depends on constant $C$, on the number of time iterations of the dynamical system $N$, and on the power $k\geq 1$.

We go back to \eqref{bddforw} again, but this time we consider $x_0=0$. If we write $A$ as $A=I-(I-A)$, then the expression $\sum\limits_{k=0}^{n-1} A^{k} w$ can be rewritten as 
\[
\sum_{k=0}^{n-1} A^{k} w = \sum_{k=0}^{n-1} \alpha_{n,k} (I-A)^{k} w, \qquad 1 \leq n \leq N,
\]
for some complex coefficients $\alpha_{n,k}$ whose precise values are not relevant here. Thus, we get
\begin{eqnarray*}
\|w\|_\HH^2 
&\leq& C \left(\sum_{j=1}^\infty\sum_{n=0}^{N-1} \left| \left\langle \underbrace{A^nx_0}_{0}+\sum_{k=0}^{n-1}A^{k} w, g_j \right\rangle \right|^2 \right)\\
&=& C \left(\sum_{j=1}^\infty\sum_{n=1}^{N-1} \left| \left\langle \sum_{k=0}^{n-1}A^{k} w, g_j \right\rangle \right|^2 \right)\\
&=& C \left(\sum_{j=1}^\infty \sum_{n=1}^{N-1} \left| \left\langle \sum_{k=0}^{n-1} \alpha_{n,k} (I-A)^{k} w,g_j \right\rangle \right|^2 \right) \\
&\leq& C(\sup|\alpha_{n,k}|)^2\left(\sum_{j=1}^\infty \sum_{n=1}^{N-1}\sum_{k=0}^{n-1}\left|\left\langle (I-A)^{k} w,g_j \right\rangle \right|^2 \right)\\
&\leq&C(\sup|\alpha_{n,k}|)^2(N-1)\left(\sum_{j=1}^\infty \sum_{k=0}^{N-2}\left|\left\langle (I-A)^{k} w,g_j \right\rangle \right|^2 \right)\\
&=& C'\left(\sum_{j=1}^\infty \sum_{k=0}^{N-2} |\langle(I-A)^{k} w,g_j\rangle|^2 \right) \\
&\leq& C' C_\mathcal{G} \sum_{k=1}^{N-2}\|(I-A)^{k} w\|_\HH^2+C'\left(\sum_{j=1}^\infty |\langle w,g_j\rangle|^2 \right) \\
&\leq& C'' \sum_{j=1}^\infty |\langle w, g_j\rangle|^2, 
\end{eqnarray*}
where
$C'=C {(N-1)}(\sup\{|\alpha_{n,k}|: 0\leq k\leq n-1, \, 1\leq n\leq N-1\})^2$, 
the third inequality again stems from the fact that $\mathcal{G}= \{g_j\}_{j \geq 1}$ is a Bessel sequence in $\HH$, and the last inequality holds because of \eqref{IAbound} with $C''=C'(C_\mathcal{G}\sum\limits_{k=1}^{N-2} C_k+1)$.  
Therefore,
\[
\frac{1}{C''} \|w\|_\HH^2\leq  \sum_{j=1}^\infty |\langle w, g_j\rangle|^2\leq C_\mathcal{G}  \|w\|_\HH^2, \qquad  w\in \HH
\]
and we finally conclude that $\mathcal{G}= \{g_j\}_{j\geq 1}$ is a frame for $\HH$.

Now we prove the converse statement: $w$ can be recovered in a stable way  $\mathcal{G}=\{g_j\}_{j \geq 1}$ is a frame for $\HH$. 

If $\mathcal{G}=\{g_j\}_{j \geq 1}$ is a frame for $\HH$, there exists constants $c_\mathcal{G}, C_\mathcal{G}>0$ such that
\begin{equation}\label{bdofframe}
 c_\mathcal{G}\|h\|^2_{\HH} \leq \sum_{j=1}^\infty |\langle h, g_j\rangle|^2 \leq C_\mathcal{G}\|h\|_{\HH}^2,\qquad h\in\HH.
\end{equation}
Additionally, let $\widetilde{\mathcal{G}}=\{\widetilde{g_i}\}_{i \geq 1}$ be the canonical dual frame of $\mathcal{G}=\{g_j\}_{j\geq 1}$. Then, for each $j\geq 1$,
\begin{equation}\label{eq: coeff aij}
A^*g_j = \sum_{i=1}^\infty a_{ij} g_i,  \qquad \text{ where } \quad a_{ij} =\langle A^*g_j, \widetilde{g_i}\rangle.  
\end{equation}

Given the dynamical system \eqref{model}, we have
\begin{align}
\langle w,g_j\rangle
&= \langle x_{n+1},g_j\rangle-\langle Ax_n,g_j\rangle \notag\\
&=\langle x_{n+1},g_j\rangle-\langle x_n,A^*g_j\rangle \notag\\
&= \langle x_{n+1},g_j\rangle- \left\langle x_n,\sum_{i=1}^\infty a_{ij} g_i \right\rangle \notag\\
&= \langle x_{n+1},g_j\rangle-\sum_{i=1}^\infty \overline{a_{ij}}\langle x_n,g_i\rangle. \label{E:coefficients-wgj}
\end{align}
Notice that \eqref{E:coefficients-wgj} holds for any two consecutive states $x_n$ and $x_{n+1}$.

Since $\mathcal{G}$ is a frame for $\HH$,  $w\in \HH$ can be written as 
\[
w = \sum_{j=1}^\infty \langle w, g_j\rangle \widetilde{g}_j.
\]
By \eqref{E:coefficients-wgj}, every coefficient $\langle w, g_j\rangle$ can be written in terms of the measurements 
$$
\{\langle x{_n},g_j\rangle, \, \langle x{_{n+1}},g_j\rangle \}_{j\geq 1},
$$ 
for all values of $n\geq 0$. Therefore, we have the following reconstruction expression for $w$ in terms of the measurements of the dynamical system \eqref{model}:
\begin{equation}\label{eq: reconst with 2}
w = \sum_{j=1}^\infty \left( {\langle x_{n+1},g_j\rangle}-\sum_{i=1}^\infty \overline{a_{ij}}{\langle x_n,g_i\rangle} \right) \widetilde{g}_j.
\end{equation}
In particular, one can consider the space samples of the first two states of the system $x_0$ and $x_1$ and get
\begin{equation}\label{eq: y0 y1}
w = \sum_{j=1}^\infty \left( \langle x_1,g_j\rangle-\sum_{i=1}^\infty \overline{a_{ij}} \, \langle x_0,g_i\rangle \right) \widetilde{g}_j.
\end{equation}
Moreover, from \eqref{bdofframe} and \eqref{E:coefficients-wgj}, we have 
\begin{eqnarray}
\sum_{j=1}^\infty\left|\sum_{i=1}^\infty \overline{a_{ij}}\langle x_0,g_i\rangle\right|^2
&=&\sum_{j=1}^\infty |\langle Ax_0, g_j\rangle|^2 \notag\\
&\leq& C_\mathcal{G}\|Ax_0\|_\HH^2 
\leq C_\mathcal{G}\|A\|_{\HH\to\HH}^2\|x_0\|_\HH^2 \notag\\
&\leq& \frac{C_\mathcal{G}}{c_\mathcal{G}} \, \|A\|_{\HH\to\HH}^2 \sum_{j=1}^\infty |\langle x_0, g_j\rangle|^2, \label{eq: alg is continuous}
\end{eqnarray}
and thus
\begin{eqnarray}
\left\|\sum_{j=1}^\infty \left(\langle x_{1},g_j\rangle-\sum_{i=1}^\infty \overline{a_{ij}}\langle x_0,g_i\rangle\right)\widetilde{g_j}\right\|^2_\HH
&\leq& \frac{1}{c_\mathcal{G}} \sum_{j=1}^\infty\left|\langle x_{1},g_j\rangle-\sum_{i=1}^\infty \overline{a_{ij}}\langle x_0,g_i\rangle\right|^2 \notag\\
&\leq& \frac{2}{c_\mathcal{G}}
    \sum_{j=1}^\infty\left(|\langle x_{1},g_j\rangle|^2+\left|\sum_{i=1}^\infty \overline{a_{ij}}\langle x_0,g_i\rangle\right|^2\right)\notag \\
&\leq& \frac{2}{c_\mathcal{G}}
    \sum_{j=1}^\infty|\langle x_{1},g_j\rangle|^2+2\frac{C_\mathcal{G}}{c_\mathcal{G}^2} \, \|A\| \sum_{j=1}^\infty |\langle x_0, g_j\rangle|^2 \notag\\
&\leq& \widetilde{C}\, \left(\sum_{j=1}^\infty|\langle x_{1},g_j\rangle|^2+\sum_{j=1}^\infty |\langle x_0, g_j\rangle|^2\right), \label{eq: alg is continuous 2}
\end{eqnarray}
where $\widetilde{C}=2\max\{\frac{1}{c_\mathcal{G}}, \frac{C_\mathcal{G}}{c_\mathcal{G}^2} \, \|A\|_{\HH\to\HH}^2 \} $. These estimations imply the boundedness of the reconstruction formula \eqref{eq: y0 y1}. 

We are now able to define the operator $\mathcal{R}:\mathcal{B}(\ell^2,\C^2)\longrightarrow\HH$ by
\[
\mathcal{R}(D) =  \sum_{j=1}^\infty \left( d_{1j}-\sum_{i=1}^\infty \overline{a_{ij}} \, \left\langle \sum_{k=1}^\infty d_{0k} \, \widetilde{g_k},g_i\right\rangle \right) \widetilde{g}_j,
\]   
where we recall that $[2]=\{0,1\}$ and $D=[d_{nj}]_{n\in [2], \, j\geq 1}$.
Equivalently, by \eqref{eq: coeff aij}, the above operator can be re-written as
\begin{eqnarray}
\mathcal{R}(D) &=&  \sum_{j=1}^\infty \left( d_{1j}- \left\langle \sum_{k=1}^\infty d_{0k} \, \widetilde{g_k},A^*g_j\right\rangle \right) \widetilde{g}_j \notag\\
&=& \sum_{j=1}^\infty \left( d_{1j}- \left\langle A\left(\sum_{k=1}^\infty d_{0k} \, \widetilde{g_k}\right),g_j\right\rangle \right) \widetilde{g}_j.   \label{eq: rec 2}
\end{eqnarray}
By using \eqref{eq: rec 2} and the same bounds as in \eqref{eq: alg is continuous} and \eqref{eq: alg is continuous 2} we  can show that $\mathcal{R}$ is a well-defined bounded operator. Indeed, 
\begin{eqnarray*}
\|\mathcal{R}(D)\|_\HH^2
&=& \left\|\sum_{j=1}^\infty \left( d_{1j}-\left\langle A\left(\sum_{k=1}^\infty d_{0k} \, \widetilde{g_k}\right),g_j\right\rangle \right) \widetilde{g}_j\right\|^2_\HH\\
&\leq&  \widetilde{C}\, \left(\sum_{j=1}^\infty|d_{1j}|^2+\sum_{j=1}^\infty \left|\left\langle \sum_{k=1}^\infty d_{0k} \, \widetilde{g_k}, g_j\right\rangle\right|^2\right)\\
&\leq& \widetilde{C}\, \left(\sum_{j=1}^\infty|d_{1j}|^2+\sum_{j=1}^\infty \left|d_{0j}\right|^2\right)=\widetilde{C}\,\|D\|_{\mathcal{B}(\ell^2,\C^2)}.
\end{eqnarray*}
To justify the last inequality, let $u:= \sum_{k=1}^\infty d_{0k} \, \widetilde{g_k}\in \HH$.  Since $\{g_j\}_{j\ge 1}$ and $\{\widetilde g_j\}_{j\ge 1}$ are a pair of canonical dual frames, we also have that $u= \sum_{j=1}^\infty \langle u,g_j\rangle \, \widetilde{g_j}$. By \cite[Lemma VIII]{DS52}), the coefficients $(\langle u,g_j\rangle)_{j\geq 1}$ have the least $\ell^2$-norm. In particular,
$$\sum_{j=1}^\infty\left|\left\langle \underbrace{\sum_{k=1}^\infty d_{0k} \, \widetilde{g_k}}_{u},g_j\right\rangle\right|^2=\sum_{j=1}^\infty|\langle u,g_j\rangle|^2\leq \sum_{j=1}^\infty|d_{0j}|^2.$$
Finally, the operator $\mathcal{R}$ is linear, and it is a reconstruction operator since by \eqref{eq: y0 y1} we have
\begin{align*}
 \mathcal{R}\left([\langle x_n,g_j\rangle]_{n\in[2], \, j\geq 1}\right) &= \sum_{j=1}^\infty \left( \langle x_1,g_j\rangle-
 \sum_{i=1}^\infty \overline{a_{ij}} \, \left\langle \sum_{k=1}^\infty \underbrace{\langle x_0,g_k\rangle\widetilde{g_k}}_{x_0},g_i\right\rangle \right) \widetilde{g}_j. \\
 &=\sum_{j=1}^\infty \left( \langle x_1,g_j\rangle-\sum_{i=1}^\infty \overline{a_{ij}} \, \langle x_0,g_i\rangle \right) \widetilde{g}_j=w, 
\end{align*}
which concludes the proof. 

\qed

\subsection{Proof of Theorem \ref{thm2}} 
Consider the dynamical system \eqref{model} and suppose that for each $x_0\in\HH$ and $w\in W$ we achieve stable recovery of the source term $w$ in finite time $N$ by measuring the states of the system with a Bessel sequence $\mathcal{G}=\{g_j\}_{j\geq 1}\subset \HH$. Precisely,
the source $w\in W$ can be recovered by applying a bounded linear operator $\mathcal{R}:  \mathcal{B}(\ell^2,\mathbb{C}^N)\rightarrow \HH$. That is, $\mathcal{R}$ satisfies \eqref{bdd} and
\begin{equation}\label{bdd-22}
\mathcal{R}(\mathcal{D}(x_0,w))=w, \qquad  x_0\in \HH,\, w\in W.
\end{equation}
Our objective is to show that, $\{P_W(I-A^*)^{-1}g_j\}_{j\geq 1}$ is a frame for  $W$.

According to \eqref{bdd} and \eqref{bdd-22}, we have
\begin{eqnarray*}
\|w\|_{\HH}^2 
&\leq& C \sum_{n=0}^{N-1}\sum_{j=1}^{\infty} |\langle x_{n}, g_j\rangle|^2\\   
&=& C \left(\sum_{j=1}^{\infty}\sum_{n=0}^{N-1} \left|\left\langle A^nx_0+\sum_{i=0}^{n-1}A^i w, g_j \right\rangle\right|^2 \right)\\
&=& C \left(\sum_{j=1}^{\infty}\sum_{n=0}^{N-1}|\langle A^nx_0+(I-A^n)(I-A)^{-1}w, g_j\rangle|^2 \right), \qquad  w\in W,
\end{eqnarray*}
where we have used that the states of \eqref{model} can be expressed as
\begin{equation}\label{model-22}
x_{n} = A^{n}x_{0}+(I+A+\cdots+A^{n-1})w, \qquad n \geq 1.  
\end{equation}
Given any $w\in W$, by considering $x_0=(I-A)^{-1}w$ and substituting it into the above inequality we get
\begin{eqnarray}\label{eq: Pw and bessel}      
\|w\|_\HH^2
&\leq& NC\sum_{j=1}^{\infty} |\langle (I-A)^{-1}w, g_j\rangle|^2
\leq  C'\|(I-A)^{-1}w\|^2
\leq  C''\|w\|^2, 
\end{eqnarray}
where in the second inequality follows since $\mathcal{G}=\{g_j\}_{j \geq 1}$ is a Bessel sequence in $\HH$, and the last inequality holds because of $(I-A)^{-1}$ is a bounded linear operator on $\HH$. 
Note that, if $C_\mathcal{G}$ denotes the optimal Bessel bound for $\mathcal{G}$, then $C'=NCC_\mathcal{G}$ and $C''=C'\|(I-A)^{-1}\|_{\HH\to\HH}$. 

Since
\begin{equation*}      
\sum_{j=1}^{\infty} |\langle (I-A)^{-1}w, g_j\rangle|^2
= \sum_{j=1}^{\infty} |\langle w, (I-A^*)^{-1}g_j\rangle|^2
= \sum_{j=1}^{\infty} |\langle w, P_W(I-A^*)^{-1}g_j\rangle|^2,
\end{equation*}
from \eqref{eq: Pw and bessel} we have
\begin{equation*}      
\frac{1}{NC}\|w\|_\HH^2
\leq \sum_{j=1}^{\infty} |\langle w, P_W(I-A^*)^{-1}g_j\rangle|^2\leq  C''\|w\|^2, \qquad  w\in W,
\end{equation*}
and so $\{P_W(I-A^*)^{-1}g_j\}_{j\geq 0}$ is a frame for the subspace $W$.
\qed

\section{Proofs of theorems for infinite time iterations} \label{S:proofs-part2}

\begin{lemma}\label{lemma: D in B}
Consider the dynamical system $(\HH, \, W, \mathcal{F}, \, \mathcal{S})$ {(see Definition \ref {genmod})} with any initial state $x_0 \in \HH$. {If $\mathcal{G}=\{g_j\}_{j\geq 1}\subset\HH$ is a Bessel sequence, then} the data matrix {$\mathcal{D}(x_0,w) = [\langle x_{n}, g_j\rangle]_{n\ge0, j\ge1}$} belongs to $\mathcal{B}^s(\ell^2,\ell^\infty)$.
\end{lemma}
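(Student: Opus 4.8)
The plan is to verify directly the two defining conditions of $\mathcal{B}^s(\ell^2,\ell^\infty)$: that every row of $\mathcal{D}(x_0,w)$ lies in $\ell^2$, and that the rows converge in the $\ell^2$-norm to some $t \in \ell^2$. (Equivalently, one may first check $\mathcal{D}(x_0,w) \in \mathcal{B}(\ell^2,\ell^\infty)$ via Lemma \ref{L:normA-l2linfty} and then apply Lemma \ref{L:B-strong}.) The natural candidate for the limiting vector is $t := (\langle \mathcal{S}(w), g_j\rangle)_{j \ge 1}$, i.e.\ the image of the stationary state $\mathcal{S}(w)$ under the analysis operator of $\mathcal{G}$.

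First I would fix $x_0 \in \HH$ and $w \in W$, write $r_n := (\langle x_n, g_j\rangle)_{j\ge 1}$ for the $n$-th row, and let $C_\mathcal{G}$ be the optimal Bessel bound of $\mathcal{G}$. Since $x_n \in \HH$ for every $n$, the Bessel inequality gives $\|r_n\|_{\ell^2}^2 = \sum_{j\ge 1} |\langle x_n, g_j\rangle|^2 \le C_\mathcal{G}\|x_n\|_\HH^2 < \infty$, so each row is in $\ell^2$; by property (ii) the stationary state $\mathcal{S}(w)$ belongs to $\HH$, so the same estimate shows $t \in \ell^2$. For the convergence, the Bessel inequality applied to $x_n - \mathcal{S}(w)$ yields
\[
\|r_n - t\|_{\ell^2}^2 = \sum_{j\ge 1} |\langle x_n - \mathcal{S}(w), g_j\rangle|^2 \le C_\mathcal{G}\,\|x_n - \mathcal{S}(w)\|_\HH^2,
\]
and the right-hand side tends to $0$ as $n \to \infty$ by property (iii) of the system $(\HH, W, \mathcal{F}, \mathcal{S})$. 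Hence $r_n \to t$ in $\ell^2$. A convergent sequence in $\ell^2$ is bounded, so $\sup_{n\ge 0}\|r_n\|_{\ell^2} < \infty$, and by Lemma \ref{L:normA-l2linfty} we get $\mathcal{D}(x_0,w) \in \mathcal{B}(\ell^2,\ell^\infty)$; combining with the convergence of the rows (Lemma \ref{L:B-strong}) we conclude $\mathcal{D}(x_0,w) \in \mathcal{B}^s(\ell^2,\ell^\infty)$.

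There is no serious obstacle here: the statement is essentially a repackaging of property (iii) through the uniformly bounded analysis map of the Bessel sequence $\mathcal{G}$. The only points requiring a word of care are the harmless index shift — the general definition of $\mathcal{B}^s(\ell^2,\ell^\infty)$ indexes rows from $i \ge 1$ while the dynamical data is indexed from $n \ge 0$, which does not affect convergence — and noting that linearity of $\mathcal{F}$ plays no role in this lemma (only properties (i)–(iii) are used, and in fact only (ii)–(iii)).
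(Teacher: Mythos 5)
Your proof is correct and follows essentially the same route as the paper: take $t=(\langle \mathcal{S}(w),g_j\rangle)_{j\ge 1}$ as the limiting row, bound $\|r_n-t\|_{\ell^2}^2\le C_\mathcal{G}\|x_n-\mathcal{S}(w)\|_\HH^2$ via the Bessel property, and invoke property (iii) together with Lemma \ref{L:B-strong}. Your extra remark that the rows are uniformly bounded (so that $\mathcal{D}(x_0,w)\in\mathcal{B}(\ell^2,\ell^\infty)$, the standing hypothesis of Lemma \ref{L:B-strong}) is a small point the paper leaves implicit, but the argument is the same.
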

\begin{proof}

By assumption, the states $\{x_n\}$ of the dynamical system \eqref{model-2} satisfy
\begin{equation} \label{E:estim-xn-x2}
\|x_n - \mathcal{S}(w)\|_{\HH} \to 0, \qquad n \to \infty.    
\end{equation}
The $n$-th row of $\mathcal{D}(x_0,w) $ is 
\[
r_n := \big(\, \langle x_{n}, g_1\rangle, \, \langle x_{n}, g_2\rangle, \, \dots \,\big).
\]
Define
\[
r := \big(\, \langle \mathcal{S}(w), g_1\rangle, \, \langle \mathcal{S}(w), g_2\rangle, \, \dots \,\big).
\]
Since $\mathcal{G}=\{g_j\}_{j\geq 1}$ is a Bessel sequence in $\HH$ (with optimal Bessel bound $C_\mathcal{G}$), by \eqref{E:estim-xn-x2}, we have
\begin{eqnarray}
\|r_n-r\|_{\ell^2}^{2} 
&=&   \sum_{j=1}^{\infty} \left| \langle x_{n}, g_j\rangle - \langle \mathcal{S}(w), g_j\rangle \right|^2 \notag\\
&=&   \sum_{j=1}^{\infty} \left| \langle (x_{n}-\mathcal{S}(w)), g_j\rangle\right|^2 \notag\\
&\leq&  C_{\mathcal{G}} \|x_{n}-\mathcal{S}(w)\|_{\HH}^2 \to 0, \qquad n\to \infty. \label{E:estim-xi-x2}
\end{eqnarray}
Hence, by Lemma \ref{L:B-strong}, $\mathcal{D}(x_0,w) \in \mathcal{B}^{s}(\ell^2,\ell^\infty)$.    
\end{proof}

\subsection{Proof of Theorem \ref{thmforsubspace-general}} 
Fix any $x_0 \in \HH$ and any source $w\in W$, and put {$
\mathcal{D}(x_0,w) := \big[ \langle x_{i}, g_j\rangle \big]_{n\ge0, j\ge1}$}. By Lemma \ref{lemma: D in B}, $\mathcal{D}(x_0,w) \in \mathcal{B}^{s}(\ell^2,\ell^\infty)$.

Suppose that there is a bounded linear operator $\mathcal{R}: \mathcal{B}^{s}(\ell^2,\ell^\infty) \to \HH$ such that
\[
\mathcal{R} \big( \mathcal{D}(x_0,w) \big) = w \qquad \forall x_0\in\HH, \qquad w\in W.
\]
Hence,
\begin{equation}\label{boundofS2}
\|w\|_\HH \leq \|\mathcal{R}\|_{\mathcal{B}^{s}(\ell^2,\ell^\infty) \to \HH} \, \sup_{n \geq 0} \left(\sum_{j=1}^{\infty} |\langle x_n, g_j\rangle|^2 \right)^{1/2}, \qquad  x_0\in\HH, \, w\in W.
\end{equation}
Since $x_0 \in \HH$ is arbitrary, we can take $x_0=\mathcal{S}(w)$. Hence, $x_n=\mathcal{S}(w)$ for all $n\geq 0$, and \eqref{boundofS2} becomes
\[
\|w\|_{\HH} \leq \|\mathcal{R}\|_{\mathcal{B}^{s}(\ell^2,\ell^\infty) \to \HH} \, \left(\sum_{j=1}^{\infty} |\langle \mathcal{S}(w), g_j\rangle|^2 \right)^{1/2}, \qquad w\in W,
\]
which we rewrite as
\[
m\|w\|_{\HH}^{2} \leq  \sum_{j=1}^{\infty} |\langle w, \, \mathcal{S}^{*} g_j\rangle|^2, \qquad  w\in W
\]
with $m = \|\mathcal{R}\|_{\mathcal{B}^{s}(\ell^2,\ell^\infty) \to \HH}^{-2}$.
 Finally, since $\mathcal{G}$ is a Bessel sequence (with optimal Bessel bound $C_\mathcal{G}$), we also have
\begin{eqnarray*}
\sum_{j=1}^{\infty} |\langle w, \, \mathcal{S}^{*}g_j\rangle|^2 
&=& \sum_{j=1}^{\infty} |\langle \mathcal{S}(w), \, g_j\rangle|^2
\leq C_{\mathcal{G}} \|\mathcal{S}(w)\|_{\HH}^{2}\\
&\leq& C_{\mathcal{G}} \|\mathcal{S}\|_{W \to \HH}^{2} \, \|w\|_{\HH}^{2}= M \, \|w\|_{\HH}^{2}, \qquad \forall w\in W.
\end{eqnarray*}
with $M=C_{\mathcal{G}} \|\mathcal{S}\|_{W \to \HH}^{2}$. Therefore, we conclude that $\{\mathcal{S}^{*}g_j\}_{j\geq 1}$ is a frame for $W.$

For the converse implication, suppose that $\{\mathcal{S}^{*} g_j\}_{j \geq 1}$ is a frame for $W$, and let 
$
\widetilde{\mathcal{G}} = \{\widetilde{g_j}\}_{j \geq 1}\subset W 
$
be a dual frame of $\{\mathcal{S}^{*} g_j\}_{j \geq 1}$. By definition of $\mathcal{D}(x_0,w)$,
\[
(\mathcal{D}(x_0,w) \widetilde{\mathcal{G}})_n = \sum_{j=1}^{\infty} \langle x_n, g_j\rangle \widetilde{g_j}, \qquad n \geq 0. 
\]
Clearly, $(\mathcal{D}(x_0,w) \widetilde{\mathcal{G}})_n \in W$. Since $\{\widetilde{g_j}\}_{j \geq 1}$ is a dual frame of $\{\mathcal{S}^{*}g_j\}_{j \geq 1}$, we can also write
\[
w = \sum_{j=1}^{\infty} \langle w, \mathcal{S}^{*} g_j\rangle\widetilde{g_j}. 
\]
Hence, by \eqref{E:estim-xi-x2}, we have 
\begin{eqnarray*}
\|(\mathcal{D} \widetilde{\mathcal{G}})_n - w\|_{\HH}^2 &=& \left\| \sum_{j=1}^{\infty} \left( \langle x_n, g_j\rangle - \langle w, \mathcal{S}^{*} g_j\rangle\right) \widetilde{g_j} \right\|_{\HH}^2\\
&\leq& C_{\widetilde{\mathcal{G}}} \sum_{j=1}^{\infty} \big| \langle x_n, g_j\rangle - \langle w, \mathcal{S}^{*} g_j\rangle \big|^2\\
&=& C_{\widetilde{\mathcal{G}}} \sum_{j=1}^{\infty} \big| \langle x_n-\mathcal{S}(w), g_j\rangle \big|^2\\
&=& C_{\widetilde{\mathcal{G}}} C_{\mathcal{G}} \|x_{n}-\mathcal{S}(w)\|_{\HH}^2 \to 0.
\end{eqnarray*}
This means that $\mathcal{R}:\mathcal{B}^s(\ell^2,\ell^\infty)\to \HH$ defined by $\mathcal{R}(D)=\lim D\widetilde{\mathcal{G}}$ satisfies
\[
\mathcal{R}\big( \mathcal{D}(x_0,w) \big) = w.
\]
In other words, by Theorem \ref{T:B-strong-recovery},  we have a stable reconstruction.
\qed

\subsection{Proof of Theorem \ref{thmforsubspace}} 
It is enough to show that the conditions of Theorem \ref{thmforsubspace-general} are fulfilled. To do so, note that the equation \eqref{model} simplifies to
\begin{equation} \label{rewritemodel}
x_n=A^nx_0+(I-A^n)(I-A)^{-1}w, \qquad n \geq 1. 
\end{equation}
We rewrite this identity as
\[
x_n - (I-A)^{-1}w = A^n \big( x_0 -(I-A)^{-1}w \big), \qquad n \geq 1. 
\]
Hence,
\[
\|x_n - (I-A)^{-1}w\|_{\HH} =  \|x_0 -(I-A)^{-1}w\|_{\HH} \, \|A^n\|_{\HH \to \HH}, \qquad n \geq 1. 
\]
Since we assumed that $\rho(A)<1$, we have
\[
\|A^n\|_{\HH \to \HH} \to 0, \qquad n \to \infty.
\]
Therefore,
\begin{equation} \label{E:estim-xn-x}
\|x_n - (I-A)^{-1}w\|_{\HH} \to 0, \qquad n \to \infty.    
\end{equation}
This suggests that the stationary point of the dynamical system is uniquely given
\[
\mathcal{S}(w) = (I-A)^{-1}w.
\]
In fact, it is easy to see that if we take $x_0 = (I-A)^{-1}w$, then $x_n=x_0$ for all $n \geq 1$. Moreover, the operator $\mathcal{S} := (I-A)^{-1}|_{W}$ is bounded and invertible. Hence, the conclusion follows from Theorem \ref{thmforsubspace-general}. Note that the adjoint of $\mathcal{S}$ is given by $\mathcal{S}^{*} = P_W(I-A^*)^{-1}$.
\qed

\section{A descriptive Example} \label{S:Example}
We present an example demonstrating that the recovery of $w\in W\subset\HH$ requires an infinite number of time samples when $\HH$ is an infinite-dimensional Hilbert space. It is interesting to note that the subspace $W$ in this example is one-dimensional. 

Let $\HH=\ell^2$, let
\begin{equation} \label{E:def-w-in-l2}
w = (1/2,\, 1/4,\, 1/8,\, \dots)
\end{equation}
with $W$ being the one-dimensional subspace of $\ell^2$ generated by $w$. Let
\[
A =
\begin{bmatrix}
\lambda_1 & 0 & 0 & \cdots\\
0 & \lambda_2 & 0 & \cdots\\
0 & 0 & \lambda_3 & \cdots\\
\vdots & \vdots & \vdots & \ddots\\
\end{bmatrix},
\]
where $0<\lambda_i<1$, $i \geq 1$, and $\lambda_i\ne \lambda_j$ if $i\ne j$. Hence, $A$ acts as a bounded linear operator on $\ell^2$.
    
Let $g=(I-A)w$, which gives $P_W(I-A^*)^{-1}g=w$. Therefore, $\{P_W(I-A^*)^{-1}g\}$ is a frame for $W$. Moreover, by \eqref{E:def-w-in-l2} and that $A$ is diagonal, we have
\begin{equation} \label{E:def-g-in-l2}
g = (g_1,g_2,g_3,\dots) = \big( (1-\lambda_1)/2,\, (1-\lambda_2)/4,\, (1-\lambda_3)/8,\, \dots \big).
\end{equation}
Note that each coordinate $g_i$ is nonzero.

Consider the dynamical system
\[
x_n=Ax_{n-1}+cw,
\] 
for some $c\ne 0$. Then
\begin{equation} \label{E:sustem-1dim}
\langle x_n, g\rangle=\langle x_0,A^n g\rangle+c\langle w,\Lambda_n g\rangle, \qquad n \geq 1,    
\end{equation}
where $\Lambda_n=I+A+\cdots+A^{n-1}$. Let 
\begin{equation} \label{E:def-x-inH}
x_0=(a_1, a_2,\dots, a_N, 0,0,\dots).
\end{equation}
Using \eqref{E:def-w-in-l2} and \eqref{E:def-g-in-l2}, in terms of the coordinates of vectors involved, the system \eqref{E:sustem-1dim} for $n=0,1,\dots,N-1$ can be written as
\[
\begin{bmatrix}
\langle x_0, g\rangle\\
\langle x_1, g\rangle\\
\langle x_2, g\rangle\\
\vdots\\
\langle x_{N-1}, g\rangle\\
\end{bmatrix}
=
\begin{bmatrix}
g_1 & g_2 & \cdots & g_{N} & 0\\
\lambda_1g_1 & \lambda_2g_2 & \cdots & \lambda_Ng_N & b_1\\
\lambda_1g_1 & \lambda_2g_2 & \cdots & \lambda_Ng_N & b_1\\
\vdots & \vdots & \ddots & \vdots & \vdots\\
\lambda_1^{N-1}g_1 & \lambda_2^{N-1}g_2 & \cdots & \lambda_N^{N-1}g_N & b_{N-1}
\end{bmatrix}  
\begin{bmatrix}
a_1\\
a_2\\
a_3\\
\vdots\\
c
\end{bmatrix},
\]
where $b_i=\langle w, \Lambda_i g\rangle$. Note that the matrix is of dimension $N \times (N+1)$. Moreover, since $g_i \ne 0$, $i \geq 1$, we have
\[
\begin{vmatrix}
g_1 & g_2 & \cdots & g_{N}\\
\lambda_1g_1 & \lambda_2g_2 & \cdots & \lambda_Ng_N\\
\lambda_1g_1 & \lambda_2g_2 & \cdots & \lambda_Ng_N\\
\vdots & \vdots & \ddots & \vdots\\
\lambda_1^{N-1}g_1 & \lambda_2^{N-1}g_2 & \cdots & \lambda_N^{N-1}g_N
\end{vmatrix}\ne 0.
\]
Hence, the columns in the above $N \times N$ matrix are linearly independent. Thus, for any $c \ne 0$, there exists a unique $x_0$ of the form \eqref{E:def-x-inH} such that 
\[
\begin{bmatrix}
g_1 & g_2 & \cdots & g_{N} & 0\\
\lambda_1g_1 & \lambda_2g_2 & \cdots & \lambda_Ng_N & b_1\\
\lambda_1g_1 & \lambda_2g_2 & \cdots & \lambda_Ng_N & b_1\\
\vdots & \vdots & \ddots & \vdots & \vdots\\
\lambda_1^{N-1}g_1 & \lambda_2^{N-1}g_2 & \cdots & \lambda_N^{N-1}g_N & b_{N-1}
\end{bmatrix}  
\begin{bmatrix}
a_1\\
a_2\\
a_3\\
\vdots\\
c
\end{bmatrix}
=
\begin{bmatrix}
0\\
0\\
0\\
\vdots\\
0\\
\end{bmatrix}.
\]
Therefore, with this choice of $x_0$, we necessarily have
\[
\langle x_0, g\rangle = \langle x_1, g\rangle = \cdots = \langle x_{N-1}, g\rangle =0,
\]
and thus it is impossible to recover the source term $cw$, if we only have the first $N$ samples of measurements.

\section{Concluding remarks} \label{S:concluding}

We conclude this article by first, underlying the frame condition in the four last main theorems to guarantee stable source recovery, and secondly, by stating two future generalizations of the source recovery problem.

\subsection{Unstable recovery}
In Theorems \ref{thm1}, \ref{thm2}, \ref{thmforsubspace-general}, and \ref{thmforsubspace} the frame condition is a meeting point for stable recovery.
In the following example, we show that reconstruction of the source term is possible under weak assumptions on the sampling set of vectors $\mathcal{G}=\{g_j\}_{j\geq 1}$, but we lack stability.   

Let $\HH=\ell^2$ and consider the dynamical system \eqref{model} with $A=I$ (the identity operator) and measurements given by projecting onto the vectors $g_j:=\frac{1}{j}e_j$ for $j\geq 1$ where $\{e_j\}_{j\geq 1}$ form the standard orthonormal basis for $\ell^2$. In this case, $\mathcal{G}=\{\frac{1}{j}e_j\}_{j\in \N}$ is a Bessel sequence for $\ell^2$ but it is not a frame for $\ell^2$. 
Consider $\mathcal{R}:\mathcal{B}(\ell^2,\C^2)\to\ell^2$ defined by
    $$\mathcal{R}(D)=\sum_{j=1}^\infty j\left(d_{1j}-d_{0j}\right)e_j, \qquad D=[d_{nj}]_{n\in [2], \, j\geq 1}\in \mathcal{B}(\ell^2,\C^2).$$
    Then, given the data matrix $\mathcal{D}(x_0,w)=[\langle x_n,g_j\rangle]_{n\in [2], \, j\geq 1}$, we have 
    $$\mathcal{R}(\mathcal{D}(x_0,w))=\sum_{j=1 }^\infty j\langle x_1-x_0,\frac{1}{j}e_j\rangle e_j=\sum_{j=1}^\infty\langle w,e_j\rangle e_j=w.$$
    In conclusion, although $\mathcal{R}$ is a linear map and allows source reconstruction, it is an unbounded operator.

\subsection{Generalizations of the source recovery problem}

\subsubsection{Time-dependent source term}

If the source term depends on time, that is, if the dynamical system \eqref{model} is replaced by one  of the form
\[
x_{n+1}=Ax_n+w_n, \qquad n \in \N,
\]
where for each $n\in \N$, $w_n\in \HH$, the reconstruction formula provided by Theorem \ref{thm1} can be adopted in this framework. Indeed, assume that the spatial sampling vectors  $\mathcal{G}=\{g_j\}_{j\geq 1}$ form a frame for $\HH$. Then, for recovering each source term $w_n$, two iterations of the dynamical system are needed: Given the measurements $\{\langle x_{n}, g_j\rangle, \langle x_{n+1}, g_j\rangle\}_{j\geq 1}$, the source term $w_n$ can be linearly recovered by 
\begin{gather}\label{eq: recovering formula general n}
w_n= 
\sum_{j=1}^\infty \left(\langle x_{n+1},g_j\rangle-\sum_{i=1}^\infty \overline{a_{ij}}\langle x_n,g_i\rangle\right)\widetilde{g_j}
\end{gather}
where $\{\widetilde{g}_{j}\}_{j\geq 1}$ 
is a dual frame for $\mathcal{G}$ and the coefficients $a_{ij}$ are given according to formula \eqref{eq: coeff aij} (i.e., $a_{ij}=\langle A^*g_j,\widetilde{g}_i\rangle $) 
(cf. \eqref{eq: reconst with 2}).
Notice that if the reconstruction formulas \eqref{eq: recovering formula general n} are used, then we need all the measurements $\mathcal{D}(x_0,w)=[\langle x_{n},g_j\rangle]_{n\geq 0, \, j\geq 1}$ for recovering the source terms $\{w_n\}_{n\in\N}$. The exploration of natural conditions on the measurements $\mathcal{D}(x_0,w)$, as well as necessary and sufficient on $\mathcal{G}$ to guarantee stable recovery in this case, are part of future work.

\subsubsection{Continuous-time dynamical systems} As part of future work, we aim to explore a continuous version of the source recovery problem and address some similarities with the discrete version. Precisely, consider the continuous-time dynamical system on a separable Hilbert space $\HH$
\begin{equation}\label{eq: model cont}
	\begin{cases}
	\dot{x}(t)=Ax(t)+w,\qquad t\in\mathbb R_{\geq 0},\\
	x(0)=x_0
	\end{cases}	
\end{equation} 
where $A$ is the infinitesimal generator of a strongly continuous semigroup $T:\mathbb R_{\geq 0}\to \mathcal{B}(\HH)$, $x_0\in\HH$, and the source term is an unkwown vector $w$ in $\HH$. In the particular case when $A\in \mathcal{B}(\HH)$, then $T(t)=e^{tA}$, where $e^{A}:=\sum\limits_{n=0}^\infty \frac{1}{n!}A^n$. As before, let $\mathcal{G}=\{g_j\}_{j\geq 1}$ be a Bessel sequence for $\HH$, and let us denote by  
    $$\mathcal{D}_c=[\langle x(t), g_j\rangle]$$
the continuous time-space samples of the dynamical system (which depend on $x_0$ and $w$). In this setting, $\mathcal{D}_c$ can be viewed as a curve of observations 
    \begin{gather}
     \mathcal{D}_c:[0,t_0)\to\ell^2 \nonumber\\
     \mathcal{D}_c(t):=(d_1(t),d_2(t),\dots):=(\langle x(t),g_1\rangle, \langle x(t),g_2\rangle, \dots). \label{eq: curve c}
    \end{gather} 
    
    If $\mathcal{G}=\{g_j\}_{j\geq 1}$ is a frame for $\HH$, then the ideas from Theorem \ref{thm1} can be adapted so that one can reconstruct $w$ by considering the continuous time-space samples $\mathcal{D}_c=[\langle x(t), g_j\rangle]_{ t\in [0,t_0), \, j\geq 1 }$ for some $0<t_0\leq \infty$.
    Indeed, by having access to such a curve \eqref{eq: curve c}, we know in particular its value at $t=0$, i.e. 
    $$\mathcal{D}_c(0)=(d_1(0),d_2(0),\dots)=(\langle x_0,g_1\rangle, \langle x_0,g_2\rangle, \dots),$$
    and to its derivative, which using \eqref{eq: model cont} turns out to be
$$\mathcal{D}'(t)=(d'_1(t),d_2'(t),\dots)=(\langle Ax(t), g_1\rangle+\langle w,g_1\rangle, \langle Ax(t), g_2\rangle+\langle w,g_2\rangle, \dots )$$
In particular, 
$$\mathcal{D}'(0)=(d'_1(0),d_2'(0),\dots)=(\langle x_0, A^*g_1\rangle+\langle w,g_1\rangle, \langle x_0, A^*g_2\rangle+\langle w,g_2\rangle,\dots).$$
Therefore, 
$$w= \sum_{j=1}^\infty \left( d_j'(0)-\sum_{i=1}^\infty \overline{a_{ij}}d_i(0) \right) \widetilde{g}_j.$$
where $\{\widetilde{g}_j\}_{j\geq 1}$ is a dual frame for $\mathcal{G}$, and the coefficients $a_{ij}$ are given by \eqref{eq: coeff aij}. Notice that the values $d'_j(0)$ are in place of $d_{1j}=\langle x_1,g_j\rangle$ in \eqref{eq: y0 y1}.

The exploration of natural spaces for the measurements $\mathcal{D}_c$, as well as necessary and sufficient on $\mathcal{G}$ to guarantee stable recovery in this case, are part of future work.

\printbibliography

@article{MD17,
	author = {Murray-Bruce, John and Dragotti, Pier Luigi},
	date-added = {2023-07-28 13:08:04 -0500},
	date-modified = {2023-07-28 13:08:51 -0500},
	doi = {10.1109/TSP.2017.2742983},
	fjournal = {IEEE Transactions on Signal Processing},
	issn = {1053-587X,1941-0476},
	journal = {IEEE Trans. Signal Process.},
	mrclass = {94A12},
	mrnumber = {3743557},
	number = {24},
	pages = {6365--6380},
	title = {A sampling framework for solving physics-driven inverse source problems},
	url = {https://doi.org/10.1109/TSP.2017.2742983},
	volume = {65},
	year = {2017},
	Bdsk-Url-1 = {https://doi.org/10.1109/TSP.2017.2742983}}

@article{RBD21,
	author = {Alexandru, Roxana and Blu, Thierry and Dragotti, Pier Luigi},
	date-added = {2023-07-28 13:04:51 -0500},
	date-modified = {2023-07-28 13:05:08 -0500},
	doi = {10.1109/TSP.2021.3113789},
	fjournal = {IEEE Transactions on Signal Processing},
	issn = {1053-587X,1941-0476},
	journal = {IEEE Trans. Signal Process.},
	mrclass = {94A12},
	mrnumber = {4332938},
	pages = {5539--5554},
	title = {Diffusion {SLAM}: localizing diffusion sources from samples taken by location-unaware mobile sensors},
	url = {https://doi.org/10.1109/TSP.2021.3113789},
	volume = {69},
	year = {2021},
	Bdsk-Url-1 = {https://doi.org/10.1109/TSP.2021.3113789}}

@article{Men22,
	author = {Mengestie, Tesfa},
	date-added = {2023-07-28 13:01:45 -0500},
	date-modified = {2023-07-28 13:02:03 -0500},
	doi = {10.1016/j.jmaa.2022.126387},
	fjournal = {Journal of Mathematical Analysis and Applications},
	issn = {0022-247X,1096-0813},
	journal = {J. Math. Anal. Appl.},
	mrclass = {47B33 (47B32 47B38)},
	mrnumber = {4435921},
	number = {1},
	pages = {Paper No. 126387, 11},
	title = {Closed range weighted composition operators and dynamical sampling},
	url = {https://doi.org/10.1016/j.jmaa.2022.126387},
	volume = {515},
	year = {2022},
	Bdsk-Url-1 = {https://doi.org/10.1016/j.jmaa.2022.126387}}

@article{MT23,
	author = {Mengestie, Tesfa},
	date-added = {2023-07-28 12:54:58 -0500},
	date-modified = {2023-07-28 12:55:12 -0500},
	doi = {10.1007/s00605-022-01768-0},
	fjournal = {Monatshefte f\"{u}r Mathematik},
	issn = {0026-9255,1436-5081},
	journal = {Monatsh. Math.},
	mrclass = {46 (30 47)},
	mrnumber = {4619041},
	number = {1},
	pages = {161--170},
	title = {Closed range {V}olterra-type integral operators and dynamical sampling},
	url = {https://doi.org/10.1007/s00605-022-01768-0},
	volume = {202},
	year = {2023},
	Bdsk-Url-1 = {https://doi.org/10.1007/s00605-022-01768-0}}

@article{BH23,
	author = {Beinert, Robert and Hasannasab, Marzieh},
	date-added = {2023-07-28 12:51:04 -0500},
	date-modified = {2023-07-28 12:51:17 -0500},
	doi = {10.1007/s10444-023-10059-7},
	fjournal = {Advances in Computational Mathematics},
	issn = {1019-7168,1572-9044},
	journal = {Adv. Comput. Math.},
	mrclass = {65F15 (15 42 94)},
	mrnumber = {4619509},
	number = {4},
	pages = {Paper No. 56},
	title = {Phase retrieval and system identification in dynamical sampling via {P}rony's method},
	url = {https://doi.org/10.1007/s10444-023-10059-7},
	volume = {49},
	year = {2023},
	Bdsk-Url-1 = {https://doi.org/10.1007/s10444-023-10059-7}}

@article{CH23,
	abstract = {This short note is concerned with various operator representations of frames in a Hilbert space {\$}{\$}{\{}{\{}{\{}{$\backslash$}mathcal {\{}H{\}}{\}}{\}}{\}}.{\$}{\$}While it is known that only very special frames can be represented in the form {\$}{\$}{$\backslash$}{\{} T\^{}n {$\backslash$}varphi {$\backslash$}{\}}{\_}{\{}n=0{\}}\^{}{$\backslash$}infty {\$}{\$}for a bounded operator T,  we prove that every frame (actually, every Bessel sequence) has a representation of the form {\$}{\$}{$\backslash$}{\{}UT\^{}k {$\backslash$}varphi {$\backslash$}{\}}{\_}{\{}k=0{\}}\^{}{$\backslash$}infty {\$}{\$}for certain bounded operators U and T. We also provide a lifting procedure that allows to represent any given Bessel sequence in the form {\$}{\$}{$\backslash$}{\{}PT\^{}k {$\backslash$}varphi {$\backslash$}{\}}{\_}{\{}k=0{\}}\^{}{$\backslash$}infty ,{\$}{\$}where T is a bounded operator on an ambient Hilbert space and P denotes the orthogonal projection onto the given Hilbert space {\$}{\$}{\{}{\{}{\{}{$\backslash$}mathcal {\{}H{\}}{\}}{\}}{\}}.{\$}{\$}In particular, this implies that for any frame, the frame coefficients of {\$}{\$}f{$\backslash$}in {\{}{\{}{\{}{$\backslash$}mathcal {\{}H{\}}{\}}{\}}{\}}{\$}{\$}can be calculated as inner products between f and a system of functions of the form {\$}{\$}{$\backslash$}{\{}T\^{}k {$\backslash$}varphi {$\backslash$}{\}}{\_}{\{}k=0{\}}\^{}{$\backslash$}infty ,{\$}{\$}for a bounded operator T on an ambient Hilbert space.},
	author = {Christensen, Ole and Hasannasab, Marzieh},
	da = {2023/07/06},
	date-added = {2023-07-28 12:50:33 -0500},
	date-modified = {2023-07-28 12:50:33 -0500},
	doi = {10.1007/s43670-023-00061-x},
	id = {Christensen2023},
	isbn = {2730-5724},
	journal = {Sampling Theory, Signal Processing, and Data Analysis},
	number = {2},
	pages = {22},
	title = {Frames and generalized operator orbits},
	ty = {JOUR},
	url = {https://doi.org/10.1007/s43670-023-00061-x},
	volume = {21},
	year = {2023},
	Bdsk-Url-1 = {https://doi.org/10.1007/s43670-023-00061-x}}

@article{ACCP21,
	author = {Aguilera, A. and Cabrelli, C. and Carbajal, D. and Paternostro, V.},
	date-added = {2023-07-28 12:21:59 -0500},
	date-modified = {2023-07-28 12:22:15 -0500},
	doi = {10.1016/j.acha.2020.11.004},
	fjournal = {Applied and Computational Harmonic Analysis. Time-Frequency and Time-Scale Analysis, Wavelets, Numerical Algorithms, and Applications},
	issn = {1063-5203,1096-603X},
	journal = {Appl. Comput. Harmon. Anal.},
	mrclass = {94A20 (42C15)},
	mrnumber = {4185100},
	mrreviewer = {S.\ Sivananthan},
	pages = {258--274},
	title = {Dynamical sampling for shift-preserving operators},
	url = {https://doi.org/10.1016/j.acha.2020.11.004},
	volume = {51},
	year = {2021},
	Bdsk-Url-1 = {https://doi.org/10.1016/j.acha.2020.11.004}}

@article{FS19,
	author = {Freeman, Daniel and Speegle, Darrin},
	date-added = {2023-07-28 12:14:45 -0500},
	date-modified = {2023-07-28 12:14:56 -0500},
	doi = {10.1016/j.aim.2019.01.006},
	fjournal = {Advances in Mathematics},
	issn = {0001-8708,1090-2082},
	journal = {Adv. Math.},
	mrclass = {42C15 (81R30)},
	mrnumber = {3901674},
	mrreviewer = {Francesco\ Tschinke},
	pages = {784--813},
	title = {The discretization problem for continuous frames},
	url = {https://doi.org/10.1016/j.aim.2019.01.006},
	volume = {345},
	year = {2019},
	Bdsk-Url-1 = {https://doi.org/10.1016/j.aim.2019.01.006}}

@article{BK23,
	author = {Bozkurt, F. and Kornelson, K.},
	date-added = {2023-07-28 12:09:05 -0500},
	date-modified = {2023-07-28 12:09:17 -0500},
	doi = {10.1016/j.jmaa.2022.126804},
	fjournal = {Journal of Mathematical Analysis and Applications},
	issn = {0022-247X,1096-0813},
	journal = {J. Math. Anal. Appl.},
	mrclass = {42C15 (15A60)},
	mrnumber = {4507625},
	number = {2},
	pages = {Paper No. 126804, 17},
	title = {Norm retrieval from few spatio-temporal samples},
	url = {https://doi.org/10.1016/j.jmaa.2022.126804},
	volume = {519},
	year = {2023},
	Bdsk-Url-1 = {https://doi.org/10.1016/j.jmaa.2022.126804}}

@article{AGK23,
	abstract = {We analyze the problem of recovering a source term of the form {\$}{\$}h(t)={$\backslash$}sum {\_}{\{}j{\}}h{\_}j{$\backslash$}phi (t-t{\_}j){$\backslash$}chi {\_}{\{}{$[$}t{\_}j, {$\backslash$}infty ){\}}(t){\$}{\$}from space-time samples of the solution u of an initial value problem in a Hilbert space of functions. In the expression of h, the terms {\$}{\$}h{\_}j{\$}{\$}belong to the Hilbert space, while {\$}{\$}{$\backslash$}phi {\$}{\$}is a generic real-valued function with exponential decay at {\$}{\$}{$\backslash$}infty {\$}{\$}. The design of the sampling strategy takes into account noise in measurements and the existence of a background source.},
	author = {Aldroubi, Akram and Gong, Le and Krishtal, Ilya},
	da = {2023/04/25},
	date-added = {2023-07-28 11:50:50 -0500},
	date-modified = {2023-07-28 11:50:50 -0500},
	doi = {10.1007/s43670-023-00054-w},
	id = {Aldroubi2023},
	isbn = {2730-5724},
	journal = {Sampling Theory, Signal Processing, and Data Analysis},
	number = {1},
	pages = {15},
	title = {Recovery of rapidly decaying source terms from dynamical samples in evolution equations},
	ty = {JOUR},
	url = {https://doi.org/10.1007/s43670-023-00054-w},
	volume = {21},
	year = {2023},
	Bdsk-Url-1 = {https://doi.org/10.1007/s43670-023-00054-w}}

@article{AHKK23,
	abstract = {In this paper, we consider the problem of recovery of a burst-like forcing term in an initial value problem (IVP) in the framework of dynamical sampling. We introduce an idea of using two particular classes of samplers that allow one to predict the solution of the IVP over a time interval without a burst. This leads to two different algorithms that stably and accurately approximate the burst-like forcing term even in the presence of a measurement acquisition error and a large background source.},
	author = {Akram Aldroubi and Longxiu Huang and Keri Kornelson and Ilya Krishtal},
	date-added = {2023-07-28 11:39:57 -0500},
	date-modified = {2023-07-28 11:40:19 -0500},
	doi = {https://doi.org/10.1016/j.acha.2023.03.003},
	issn = {1063-5203},
	journal = {Applied and Computational Harmonic Analysis},
	keywords = {Sampling theory, Forcing, Frames, Reconstruction, Semigroups, Continuous sampling},
	pages = {322-347},
	title = {Predictive algorithms in dynamical sampling for burst-like forcing terms},
	url = {https://www.sciencedirect.com/science/article/pii/S1063520323000246},
	volume = {65},
	year = {2023},
	Bdsk-Url-1 = {https://www.sciencedirect.com/science/article/pii/S1063520323000246},
	Bdsk-Url-2 = {https://doi.org/10.1016/j.acha.2023.03.003}}

@article{GRUV15,
	author = {Gr\"{o}chenig, Karlheinz and Romero, Jos\'{e} Luis and Unnikrishnan, Jayakrishnan and Vetterli, Martin},
	date-added = {2021-10-05 11:42:39 -0500},
	date-modified = {2021-10-05 11:42:53 -0500},
	doi = {10.1016/j.acha.2014.11.002},
	fjournal = {Applied and Computational Harmonic Analysis. Time-Frequency and Time-Scale Analysis, Wavelets, Numerical Algorithms, and Applications},
	issn = {1063-5203},
	journal = {Appl. Comput. Harmon. Anal.},
	mrclass = {94A20 (94A12)},
	mrnumber = {3398946},
	mrreviewer = {Ilona Iglewska-Nowak},
	number = {3},
	pages = {487--510},
	title = {On minimal trajectories for mobile sampling of bandlimited fields},
	url = {https://doi.org/10.1016/j.acha.2014.11.002},
	volume = {39},
	year = {2015},
	Bdsk-Url-1 = {https://doi.org/10.1016/j.acha.2014.11.002}}

@incollection{MMM21,
    	author = {D\'{\i}az Mart\'{\i}n, Roc\'{\i}o  and Medri, Ivan and Molter, Ursula},
    	booktitle = {Excursions in harmonic analysis. {V}ol. 6},
    	date-added = {2021-10-05 11:26:55 -0500},
    	date-modified = {2021-10-05 11:27:30 -0500},
    	mrclass = {42C15 (93C57)},
    	mrnumber = {4311225},
    	pages = {269--295},
    	publisher = {Birkh\"{a}user/Springer, Cham},
    	series = {Appl. Numer. Harmon. Anal.},
    	title = {Dynamical sampling: a view from control theory},
    	year = {[2021] \copyright 2021}}

@article{AD20,
	author = {Alexandru, Roxana and Dragotti, Pier Luigi},
	date-added = {2021-09-30 14:22:14 -0500},
	date-modified = {2021-09-30 14:22:28 -0500},
	doi = {10.1109/TSP.2019.2961301},
	fjournal = {IEEE Transactions on Signal Processing},
	issn = {1053-587X},
	journal = {IEEE Trans. Signal Process.},
	mrclass = {94A20 (94A12)},
	mrnumber = {4114586},
	pages = {747--763},
	title = {Reconstructing classes of non-bandlimited signals from time encoded information},
	url = {https://doi.org/10.1109/TSP.2019.2961301},
	volume = {68},
	year = {2020},
	Bdsk-Url-1 = {https://doi.org/10.1109/TSP.2019.2961301}}

@article{DMM21,
	author = {D\'{\i}az Mart\'{\i}n, Roc\'{\i}o and Medri, Ivan and Molter, Ursula},
	date-added = {2021-09-11 11:27:22 -0500},
	date-modified = {2021-09-11 11:30:58 -0500},
	doi = {10.1016/j.jmaa.2021.125060},
	fjournal = {Journal of Mathematical Analysis and Applications},
	issn = {0022-247X},
	journal = {J. Math. Anal. Appl.},
	mrclass = {42C15 (47B32 47N70)},
	mrnumber = {4216965},
	number = {2},
	pages = {Paper No. 125060, 19},
	title = {Continuous and discrete dynamical sampling},
	url = {https://doi-org.proxy.library.vanderbilt.edu/10.1016/j.jmaa.2021.125060},
	volume = {499},
	year = {2021},
	Bdsk-Url-1 = {https://doi-org.proxy.library.vanderbilt.edu/10.1016/j.jmaa.2021.125060},
	Bdsk-Url-2 = {https://doi.org/10.1016/j.jmaa.2021.125060}}

@article{KS19,
	author = {Kasumov, Zaur A. and Shukurov, Aydin Sh.},
	date-added = {2021-09-11 11:27:22 -0500},
	date-modified = {2021-09-11 11:28:35 -0500},
	doi = {10.1007/s00025-019-1009-8},
	fjournal = {Results in Mathematics},
	issn = {1422-6383},
	journal = {Results Math.},
	mrclass = {42C15 (46B15 46B25 46E30)},
	mrnumber = {3936046},
	number = {2},
	pages = {Paper No. 84, 8},
	title = {On frame properties of iterates of a multiplication operator},
	url = {https://doi-org.proxy.library.vanderbilt.edu/10.1007/s00025-019-1009-8},
	volume = {74},
	year = {2019},
	Bdsk-Url-1 = {https://doi-org.proxy.library.vanderbilt.edu/10.1007/s00025-019-1009-8},
	Bdsk-Url-2 = {https://doi.org/10.1007/s00025-019-1009-8}}

@article{UZ21,
	author = {Ulanovskii, Alexander and Zlotnikov, Ilya},
	date-added = {2021-09-04 14:54:11 -0500},
	date-modified = {2021-09-04 14:54:25 -0500},
	doi = {10.1016/j.jfa.2021.108962},
	fjournal = {Journal of Functional Analysis},
	issn = {0022-1236},
	journal = {J. Funct. Anal.},
	mrclass = {42C15 (41A17 94A20)},
	mrnumber = {4220748},
	number = {9},
	pages = {Paper No. 108962, 14},
	title = {Reconstruction of bandlimited functions from space-time samples},
	url = {https://doi-org.proxy.library.vanderbilt.edu/10.1016/j.jfa.2021.108962},
	volume = {280},
	year = {2021},
	Bdsk-Url-1 = {https://doi-org.proxy.library.vanderbilt.edu/10.1016/j.jfa.2021.108962},
	Bdsk-Url-2 = {https://doi.org/10.1016/j.jfa.2021.108962}}

@article{CH19,
	author = {Christensen, Ole and Hasannasab, Marzieh},
	date-added = {2021-09-04 14:43:55 -0500},
	date-modified = {2021-09-04 14:44:37 -0500},
	doi = {10.1016/j.acha.2018.04.002},
	fjournal = {Applied and Computational Harmonic Analysis. Time-Frequency and Time-Scale Analysis, Wavelets, Numerical Algorithms, and Applications},
	issn = {1063-5203},
	journal = {Appl. Comput. Harmon. Anal.},
	mrclass = {42C15},
	mrnumber = {3926962},
	mrreviewer = {S. Sivananthan},
	number = {3},
	pages = {664--673},
	title = {Frame properties of systems arising via iterated actions of operators},
	url = {https://doi-org.proxy.library.vanderbilt.edu/10.1016/j.acha.2018.04.002},
	volume = {46},
	year = {2019},
	Bdsk-Url-1 = {https://doi-org.proxy.library.vanderbilt.edu/10.1016/j.acha.2018.04.002},
	Bdsk-Url-2 = {https://doi.org/10.1016/j.acha.2018.04.002}}

@article{CMPP20,
	author = {Cabrelli, Carlos and Molter, Ursula and Paternostro, Victoria and Philipp, Friedrich},
	date-added = {2021-09-04 14:24:15 -0500},
	date-modified = {2021-10-30 11:18:03 -0500},
	doi = {10.1007/s11854-020-0099-2},
	fjournal = {Journal d'Analyse Math\'{e}matique},
	issn = {0021-7670},
	journal = {J. Anal. Math.},
	mrclass = {42C15 (47B15 94A20)},
	mrnumber = {4093918},
	mrreviewer = {Qingyue Zhang},
	number = {2},
	pages = {637--667},
	title = {Dynamical sampling on finite index sets},
	url = {https://doi-org.proxy.library.vanderbilt.edu/10.1007/s11854-020-0099-2},
	volume = {140},
	year = {2020},
	Bdsk-Url-1 = {https://doi-org.proxy.library.vanderbilt.edu/10.1007/s11854-020-0099-2},
	Bdsk-Url-2 = {https://doi.org/10.1007/s11854-020-0099-2}}

@article{AGHJKR21,
	abstract = {We analyze the problem of reconstruction of a bandlimited function f from the space--time samples of its states {\$}{\$}f{\_}t={$\backslash$}phi {\_}t*f{\$}{\$}resulting from the convolution with a kernel {\$}{\$}{$\backslash$}phi {\_}t{\$}{\$}. It is well-known that, in natural phenomena, uniform space--time samples of f are not sufficient to reconstruct f in a stable way. To enable stable reconstruction, a space--time sampling with periodic nonuniformly spaced samples must be used as was shown by Lu and Vetterli. We show that the stability of reconstruction, as measured by a condition number, controls the maximal gap between the spacial samples. We provide a quantitative statement of this result. In addition, instead of irregular space--time samples, we show that uniform dynamical samples at sub-Nyquist spatial rate allow one to stably reconstruct the function {\$}{\$}{$\backslash$}widehat{\{}f{\}}{\$}{\$}away from certain, explicitly described blind spots. We also consider several classes of finite dimensional subsets of bandlimited functions in which the stable reconstruction is possible, even inside the blind spots. We obtain quantitative estimates for it using Remez-Tur{\'a}n type inequalities. En route, we obtain Remez-Tur{\'a}n inequality for prolate spheroidal wave functions. To illustrate our results, we present some numerics and explicit estimates for the heat flow problem.},
	author = {Aldroubi, Akram and Gr{\"o}chenig, Karlheinz and Huang, Longxiu and Jaming, Philippe and Krishtal, Ilya and Romero, Jos{\'e} Luis},
	da = {2021/09/01},
	date-added = {2021-09-04 14:17:55 -0500},
	date-modified = {2021-09-11 10:29:45 -0500},
	doi = {10.1007/s12220-021-00617-0},
	id = {Aldroubi2021},
	isbn = {1559-002X},
	journal = {The Journal of Geometric Analysis},
	number = {9},
	pages = {9241--9275},
	title = {Sampling the Flow of a Bandlimited Function},
	ty = {JOUR},
	url = {https://doi.org/10.1007/s12220-021-00617-0},
	volume = {31},
	year = {2021},
	Bdsk-Url-1 = {https://doi.org/10.1007/s12220-021-00617-0}}

@inproceedings{RDCV12,
	author = {Ranieri, Juri and Dokmani{\'c}, Ivan and Chebira, Amina and Vetterli, Martin},
	booktitle = {2012 IEEE International Conference on Acoustics, Speech and Signal Processing (ICASSP)},
	date-added = {2021-09-04 13:32:27 -0500},
	date-modified = {2021-09-04 13:32:51 -0500},
	doi = {10.1109/ICASSP.2012.6288713},
	pages = {3673-3676},
	title = {Sampling and reconstruction of time-varying atmospheric emissions},
	year = {2012},
	Bdsk-Url-1 = {https://doi.org/10.1109/ICASSP.2012.6288713}}

@article{AKh17,
	abstract = {Let H be  a separable    Hilbert space, let  G be a subset of H, and  let A be an operator on H.   Under appropriate conditions on A and G, it is known that the set  of iterations   F_G(A) 
is a  frame for H. We call  F_G(A)  a dynamical frame for H, and  explore further  its properties; in particular, we show  that its canonical dual frame also has an iterative set structure. 

 
We explore the relations between the operator A,  the set G and the number of iterations L which ensure that the system  F_G(A)  is a scalable frame. We give a general statement on frame scalability, and study in detail the   case   when A is a normal operator, utilizing the unitary diagonalization. In addition, we   answer the question of when F_G(A)  is a scalable frame in several special cases involving block-diagonal and companion operators.},
	author = {Aceska, Roza and Kim, Yeon H.},
	date-added = {2019-01-22 15:14:15 -0600},
	date-modified = {2019-01-22 15:15:01 -0600},
	doi = {10.3389/fams.2017.00022},
	issn = {2297-4687},
	journal = {Frontiers in Applied Mathematics and Statistics},
	pages = {22},
	title = {Scalability of Frames Generated by Dynamical Operators},
	url = {https://www.frontiersin.org/article/10.3389/fams.2017.00022},
	volume = {3},
	year = {2017},
	Bdsk-File-1 = {YnBsaXN0MDDSAQIDBFxyZWxhdGl2ZVBhdGhZYWxpYXNEYXRhXxAXLi4vLi4vTGlicmFyeS9BS2gxNy5wZGZPEQE6AAAAAAE6AAIAAAxNYWNpbnRvc2ggSEQAAAAAAAAAAAAAAAAAAAAAAAAAQkQAAf////8JQUtoMTcucGRmAAAAAAAAAAAAAAAAAAAAAAAAAAAAAAAAAAAAAAAAAAAAAAAAAAAAAAAAAAAAAAAAAAAAAAAA/////wAAAAAAAAAAAAAAAAACAAIAAAogY3UAAAAAAAAAAAAAAAAAB0xpYnJhcnkAAAIAJi86VXNlcnM6aWFrcjpEcm9wYm94OkxpYnJhcnk6QUtoMTcucGRmAA4AFAAJAEEASwBoADEANwAuAHAAZABmAA8AGgAMAE0AYQBjAGkAbgB0AG8AcwBoACAASABEABIAJFVzZXJzL2lha3IvRHJvcGJveC9MaWJyYXJ5L0FLaDE3LnBkZgATAAEvAAAVAAIAC///AAAACAANABoAJAA+AAAAAAAAAgEAAAAAAAAABQAAAAAAAAAAAAAAAAAAAXw=},
	Bdsk-Url-1 = {https://www.frontiersin.org/article/10.3389/fams.2017.00022},
	Bdsk-Url-2 = {https://doi.org/10.3389/fams.2017.00022}}

@article{MBD15,
	author = {J. Murray-Bruce and P. L. Dragotti},
	date-added = {2019-01-12 17:10:41 -0600},
	date-modified = {2019-01-12 17:10:59 -0600},
	doi = {10.1109/TSP.2015.2419187},
	issn = {1053-587X},
	journal = {IEEE Transactions on Signal Processing},
	keywords = {algebra;diffusion;wireless sensor networks;diffusion fields;localized source estimation;spatiotemporal sensor measurements;source distributions;Green second theorem;sensing functions;generalized samples;algebraic methods;Prony method;wireless sensor networks;Spatiotemporal phenomena;Mathematical model;Sensors;Equations;Robustness;Estimation;Approximation methods;Spatiotemporal sampling;diffusion fields;finite rate of innovation (FRI);Prony's method;sensor networks},
	number = {12},
	pages = {3018-3031},
	title = {Estimating Localized Sources of Diffusion Fields Using Spatiotemporal Sensor Measurements},
	volume = {63},
	year = {2015},
	Bdsk-Url-1 = {https://doi.org/10.1109/TSP.2015.2419187}}

@article{AK16,
	author = {Aldroubi, Akram and Krishtal, Ilya},
	date-added = {2019-01-11 15:14:16 -0600},
	date-modified = {2019-01-11 15:14:16 -0600},
	journal = {Sampl. Theory Signal Image Process.},
	keywords = {Distributed sampling, reconstruction, channel esti- mation, spectral estimation},
	pages = {9--20},
	title = {Krylov subspace methods in dynamical sampling},
	volume = {15},
	year = {2016},
	Bdsk-File-1 = {YnBsaXN0MDDSAQIDBFxyZWxhdGl2ZVBhdGhZYWxpYXNEYXRhXxAbLi4vLi4vTGlicmFyeS9BSzE1QXJ4aXYucGRmTxEBSgAAAAABSgACAAAMTWFjaW50b3NoIEhEAAAAAAAAAAAAAAAAAAAAAAAAAEJEAAH/////DUFLMTVBcnhpdi5wZGYAAAAAAAAAAAAAAAAAAAAAAAAAAAAAAAAAAAAAAAAAAAAAAAAAAAAAAAAAAAAAAAAAAP////8AAAAAAAAAAAAAAAAAAgACAAAKIGN1AAAAAAAAAAAAAAAAAAdMaWJyYXJ5AAACACovOlVzZXJzOmlha3I6RHJvcGJveDpMaWJyYXJ5OkFLMTVBcnhpdi5wZGYADgAcAA0AQQBLADEANQBBAHIAeABpAHYALgBwAGQAZgAPABoADABNAGEAYwBpAG4AdABvAHMAaAAgAEgARAASAChVc2Vycy9pYWtyL0Ryb3Bib3gvTGlicmFyeS9BSzE1QXJ4aXYucGRmABMAAS8AABUAAgAL//8AAAAIAA0AGgAkAEIAAAAAAAACAQAAAAAAAAAFAAAAAAAAAAAAAAAAAAABkA==},
	Bdsk-File-2 = {YnBsaXN0MDDSAQIDBFxyZWxhdGl2ZVBhdGhZYWxpYXNEYXRhXxAWLi4vLi4vTGlicmFyeS9BSzE2LnBkZk8RATgAAAAAATgAAgAADE1hY2ludG9zaCBIRAAAAAAAAAAAAAAAAAAAAAAAAABCRAAB/////whBSzE2LnBkZgAAAAAAAAAAAAAAAAAAAAAAAAAAAAAAAAAAAAAAAAAAAAAAAAAAAAAAAAAAAAAAAAAAAAAAAAD/////AAAAAAAAAAAAAAAAAAIAAgAACiBjdQAAAAAAAAAAAAAAAAAHTGlicmFyeQAAAgAlLzpVc2VyczppYWtyOkRyb3Bib3g6TGlicmFyeTpBSzE2LnBkZgAADgASAAgAQQBLADEANgAuAHAAZABmAA8AGgAMAE0AYQBjAGkAbgB0AG8AcwBoACAASABEABIAI1VzZXJzL2lha3IvRHJvcGJveC9MaWJyYXJ5L0FLMTYucGRmAAATAAEvAAAVAAIAC///AAAACAANABoAJAA9AAAAAAAAAgEAAAAAAAAABQAAAAAAAAAAAAAAAAAAAXk=}}

@article{Tan17,
	author = {Tang, Sui},
	date-added = {2017-11-25 22:23:41 +0000},
	date-modified = {2017-11-25 22:23:54 +0000},
	fjournal = {Advances in Computational Mathematics},
	issn = {1019-7168},
	journal = {Adv. Comput. Math.},
	mrclass = {94A20 (15A29 42C15 94A12)},
	mrnumber = {3650440},
	number = {3},
	pages = {555--580},
	title = {System identification in dynamical sampling},
	url = {https://doi-org.proxy.library.vanderbilt.edu/10.1007/s10444-016-9497-5},
	volume = {43},
	year = {2017},
	Bdsk-Url-1 = {https://doi-org.proxy.library.vanderbilt.edu/10.1007/s10444-016-9497-5}}

@incollection{AP17,
	author = {Aldroubi, Akram and Petrosyan, Armenak},
	booktitle = {Frames and other bases in abstract and function spaces},
	date-added = {2017-11-24 21:21:28 +0000},
	date-modified = {2017-11-24 21:21:37 +0000},
	mrclass = {42C15 (46E30 94A20)},
	mrnumber = {3700113},
	pages = {15--26},
	publisher = {Birkh\"auser/Springer, Cham},
	series = {Appl. Numer. Harmon. Anal.},
	title = {Dynamical sampling and systems from iterative actions of operators},
	year = {2017}}

@article{CMPP17,
	adsnote = {Provided by the SAO/NASA Astrophysics Data System},
	adsurl = {http://adsabs.harvard.edu/abs/2017arXiv170203384C},
	archiveprefix = {arXiv},
	author = {{Cabrelli}, C. and {Molter}, U. and {Paternostro}, V. and {Philipp}, F.},
	date-added = {2017-11-22 22:17:20 +0000},
	date-modified = {2017-11-22 22:17:34 +0000},
	eprint = {1702.03384},
	journal = {ArXiv e-prints},
	keywords = {Mathematics - Functional Analysis, 94A20, 42C15, 30J99, 47A53},
	month = feb,
	primaryclass = {math.FA},
	title = {{Dynamical Sampling on Finite Index Sets}},
	year = 2017}

@article{ZLL17,
	author = {Qingyue Zhang and Bei Liu and Rui Li},
	date-added = {2017-11-22 22:15:23 +0000},
	date-modified = {2017-11-22 22:15:37 +0000},
	doi = {10.1080/00036811.2016.1157586},
	eprint = {https://doi.org/10.1080/00036811.2016.1157586},
	journal = {Applicable Analysis},
	number = {5},
	pages = {760-770},
	publisher = {Taylor & Francis},
	title = {Dynamical sampling in multiply generated shift-invariant spaces},
	url = {https://doi.org/10.1080/00036811.2016.1157586},
	volume = {96},
	year = {2017},
	Bdsk-Url-1 = {https://doi.org/10.1080/00036811.2016.1157586},
	Bdsk-Url-2 = {http://dx.doi.org/10.1080/00036811.2016.1157586}}

@article{CJS15,
	archiveprefix = {arXiv},
	author = {Cheng Cheng and Yingchun Jiang and Qiyu Sun},
	bibsource = {dblp computer science bibliography, http://dblp.org},
	biburl = {http://dblp.org/rec/bib/journals/corr/ChengJS15},
	date-added = {2017-11-22 22:08:24 +0000},
	date-modified = {2017-11-22 22:09:02 +0000},
	eprint = {1511.08541},
	journal = {CoRR},
	timestamp = {Wed, 07 Jun 2017 14:43:10 +0200},
	title = {Spatially Distributed Sampling and Reconstruction},
	url = {http://arxiv.org/abs/1511.08541},
	volume = {abs/1511.08541},
	year = {2015},
	Bdsk-Url-1 = {http://arxiv.org/abs/1511.08541}}

@article{ACCMP17,
	author = {Aldroubi, A. and Cabrelli, C. and \c{C}akmak, A. F. and Molter, U. and Petrosyan, A.},
	date-added = {2017-09-13 19:58:04 +0000},
	date-modified = {2017-09-13 19:58:04 +0000},
	doi = {10.1016/j.jfa.2016.10.027},
	fjournal = {Journal of Functional Analysis},
	issn = {0022-1236},
	journal = {J. Funct. Anal.},
	mrclass = {47A10 (42C15)},
	mrnumber = {3579135},
	mrreviewer = {Ilya A. Krishtal},
	number = {3},
	pages = {1121--1146},
	title = {Iterative actions of normal operators},
	url = {http://dx.doi.org/10.1016/j.jfa.2016.10.027},
	volume = {272},
	year = {2017},
	Bdsk-File-1 = {YnBsaXN0MDDSAQIDBFxyZWxhdGl2ZVBhdGhZYWxpYXNEYXRhXxAZLi4vLi4vTGlicmFyeS9BQ0NNUDE3LnBkZk8RAUIAAAAAAUIAAgAADE1hY2ludG9zaCBIRAAAAAAAAAAAAAAAAAAAAAAAAABCRAAB/////wtBQ0NNUDE3LnBkZgAAAAAAAAAAAAAAAAAAAAAAAAAAAAAAAAAAAAAAAAAAAAAAAAAAAAAAAAAAAAAAAAAAAAD/////AAAAAAAAAAAAAAAAAAIAAgAACiBjdQAAAAAAAAAAAAAAAAAHTGlicmFyeQAAAgAoLzpVc2VyczppYWtyOkRyb3Bib3g6TGlicmFyeTpBQ0NNUDE3LnBkZgAOABgACwBBAEMAQwBNAFAAMQA3AC4AcABkAGYADwAaAAwATQBhAGMAaQBuAHQAbwBzAGgAIABIAEQAEgAmVXNlcnMvaWFrci9Ecm9wYm94L0xpYnJhcnkvQUNDTVAxNy5wZGYAEwABLwAAFQACAAv//wAAAAgADQAaACQAQAAAAAAAAAIBAAAAAAAAAAUAAAAAAAAAAAAAAAAAAAGG},
	Bdsk-Url-1 = {http://dx.doi.org/10.1016/j.jfa.2016.10.027}}

@article{ACMT17,
	abstract = {Abstract Let Y = { f ( i ) , Af ( i ) , {\^a}{\v S} , A l i f ( i ) : i {\^a} {\^I}{\copyright} } , where A is a bounded operator on {\^a} 2 ( I ) . The problem under consideration is to find necessary and sufficient conditions on A , {\^I}{\copyright} , { l i : i {\^a} {\^I}{\copyright} } in order to recover any f {\^a} {\^a} 2 ( I ) from the measurements Y. This is the so-called dynamical sampling problem in which we seek to recover a function f by combining coarse samples of f and its futures states A l f . We completely solve this problem in finite dimensional spaces, and for a large class of self adjoint operators in infinite dimensional spaces. In the latter case, although Y can be complete, using the M{\~A}{\OE}ntz{\^a}Sz{\~A}{!'}sz Theorem we show it can never be a basis. We can also show that, when {\^I}{\copyright} is finite, Y is not a frame except for some very special cases. The existence of these special cases is derived from Carleson's Theorem for interpolating sequences in the Hardy space H 2 ( D ) . Finally, using the recently proved Kadison{\^a}Singer/Feichtinger theorem we show that the set obtained by normalizing the vectors of Y can never be a frame when {\^I}{\copyright} is finite. },
	author = {Aldroubi, Akram and Cabrelli, Carlos and Molter, Ursula and Tang, Sui},
	date-added = {2017-09-13 19:40:02 +0000},
	date-modified = {2021-10-05 19:16:01 -0500},
	doi = {http://dx.doi.org/10.1016/j.acha.2015.08.014},
	issn = {1063-5203},
	journal = {Applied and Computational Harmonic Analysis},
	keywords = {Carleson's theorem},
	note = {doi: 10.1016/j.acha.2015.08.014},
	number = {3},
	pages = {378--401},
	read = {0},
	title = {Dynamical sampling},
	url = {http://www.sciencedirect.com/science/article/pii/S1063520315001177},
	volume = {42},
	year = {2017},
	Bdsk-File-1 = {YnBsaXN0MDDSAQIDBFxyZWxhdGl2ZVBhdGhZYWxpYXNEYXRhXxAYLi4vLi4vTGlicmFyeS9BQ01UMTUucGRmTxEBQAAAAAABQAACAAAMTWFjaW50b3NoIEhEAAAAAAAAAAAAAAAAAAAAAAAAAEJEAAH/////CkFDTVQxNS5wZGYAAAAAAAAAAAAAAAAAAAAAAAAAAAAAAAAAAAAAAAAAAAAAAAAAAAAAAAAAAAAAAAAAAAAAAP////8AAAAAAAAAAAAAAAAAAgACAAAKIGN1AAAAAAAAAAAAAAAAAAdMaWJyYXJ5AAACACcvOlVzZXJzOmlha3I6RHJvcGJveDpMaWJyYXJ5OkFDTVQxNS5wZGYAAA4AFgAKAEEAQwBNAFQAMQA1AC4AcABkAGYADwAaAAwATQBhAGMAaQBuAHQAbwBzAGgAIABIAEQAEgAlVXNlcnMvaWFrci9Ecm9wYm94L0xpYnJhcnkvQUNNVDE1LnBkZgAAEwABLwAAFQACAAv//wAAAAgADQAaACQAPwAAAAAAAAIBAAAAAAAAAAUAAAAAAAAAAAAAAAAAAAGD},
	Bdsk-Url-1 = {http://www.sciencedirect.com/science/article/pii/S1063520315001177}}

@article{APT15,
	author = {Aceska, Roza and Petrosyan, Armenak and Tang, Sui},
	date-added = {2017-09-13 19:30:05 +0000},
	date-modified = {2017-09-13 19:30:12 +0000},
	fjournal = {Sampling Theory in Signal and Image Processing. An International Journal},
	issn = {1530-6429},
	journal = {Sampl. Theory Signal Image Process.},
	mrclass = {94A12},
	mrnumber = {3394825},
	number = {2},
	pages = {153--169},
	title = {Multidimensional signal recovery in discrete evolution systems via spatiotemporal trade off},
	volume = {14},
	year = {2015}}

@article{ADK15,
	author = {Aldroubi, A. and Davis, J. and Krishtal, I.},
	date-added = {2015-01-23 13:49:03 +0000},
	date-modified = {2015-01-23 13:49:03 +0000},
	doi = {10.1007/s00041-014-9359-9},
	issn = {1069-5869},
	journal = {Journal of Fourier Analysis and Applications},
	keywords = {Distributed sampling; Reconstruction; Frames; Wireless sensor networks; Multichannel deconvolution; Filter banks; 94A12; 94A20; 42C15},
	language = {English},
	number = {1},
	pages = {11-31},
	publisher = {Springer US},
	title = {Exact Reconstruction of Signals in Evolutionary Systems Via Spatiotemporal Trade-off},
	url = {http://dx.doi.org/10.1007/s00041-014-9359-9},
	volume = {21},
	year = {2015},
	Bdsk-File-1 = {YnBsaXN0MDDSAQIDBFxyZWxhdGl2ZVBhdGhZYWxpYXNEYXRhXxAXLi4vLi4vTGlicmFyeS9BREsxNS5wZGZPEQE6AAAAAAE6AAIAAAxNYWNpbnRvc2ggSEQAAAAAAAAAAAAAAAAAAAAAAAAAQkQAAf////8JQURLMTUucGRmAAAAAAAAAAAAAAAAAAAAAAAAAAAAAAAAAAAAAAAAAAAAAAAAAAAAAAAAAAAAAAAAAAAAAAAA/////wAAAABQREYgQ0FSTwACAAIAAAogY3UAAAAAAAAAAAAAAAAAB0xpYnJhcnkAAAIAJi86VXNlcnM6aWFrcjpEcm9wYm94OkxpYnJhcnk6QURLMTUucGRmAA4AFAAJAEEARABLADEANQAuAHAAZABmAA8AGgAMAE0AYQBjAGkAbgB0AG8AcwBoACAASABEABIAJFVzZXJzL2lha3IvRHJvcGJveC9MaWJyYXJ5L0FESzE1LnBkZgATAAEvAAAVAAIAC///AAAACAANABoAJAA+AAAAAAAAAgEAAAAAAAAABQAAAAAAAAAAAAAAAAAAAXw=},
	Bdsk-Url-1 = {http://dx.doi.org/10.1007/s00041-014-9359-9}}

@article{ADK13,
	author = {Aldroubi, Akram and Davis, Jacqueline and Krishtal, Ilya},
	date-added = {2014-09-20 15:19:48 +0000},
	date-modified = {2014-09-20 15:19:48 +0000},
	doi = {10.1016/j.acha.2012.09.002},
	fjournal = {Applied and Computational Harmonic Analysis. Time-Frequency and Time-Scale Analysis, Wavelets, Numerical Algorithms, and Applications},
	issn = {1063-5203},
	journal = {Appl. Comput. Harmon. Anal.},
	mrclass = {94A12 (42C15)},
	mrnumber = {3027915},
	number = {3},
	pages = {495--503},
	title = {Dynamical sampling: time-space trade-off},
	url = {http://dx.doi.org/10.1016/j.acha.2012.09.002},
	volume = {34},
	year = {2013},
	Bdsk-File-1 = {YnBsaXN0MDDSAQIDBFxyZWxhdGl2ZVBhdGhZYWxpYXNEYXRhXxAcLi4vLi4vTGlicmFyeS9BREsxM19BQ0hBLnBkZk8RAVAAAAAAAVAAAgAADE1hY2ludG9zaCBIRAAAAAAAAAAAAAAAAAAAAAAAAABCRAAB/////w5BREsxM19BQ0hBLnBkZgAAAAAAAAAAAAAAAAAAAAAAAAAAAAAAAAAAAAAAAAAAAAAAAAAAAAAAAAAAAAAAAAD/////AAAAAAAAAAAAAAAAAAIAAgAACiBjdQAAAAAAAAAAAAAAAAAHTGlicmFyeQAAAgArLzpVc2VyczppYWtyOkRyb3Bib3g6TGlicmFyeTpBREsxM19BQ0hBLnBkZgAADgAeAA4AQQBEAEsAMQAzAF8AQQBDAEgAQQAuAHAAZABmAA8AGgAMAE0AYQBjAGkAbgB0AG8AcwBoACAASABEABIAKVVzZXJzL2lha3IvRHJvcGJveC9MaWJyYXJ5L0FESzEzX0FDSEEucGRmAAATAAEvAAAVAAIAC///AAAACAANABoAJABDAAAAAAAAAgEAAAAAAAAABQAAAAAAAAAAAAAAAAAAAZc=},
	Bdsk-Url-1 = {http://dx.doi.org/10.1016/j.acha.2012.09.002}}

@article{DS52,
	author = {Duffin, R. J. and Schaeffer, A. C.},
	date-added = {2014-09-20 15:19:48 +0000},
	date-modified = {2014-09-20 15:19:48 +0000},
	fjournal = {Transactions of the American Mathematical Society},
	issn = {0002-9947},
	journal = {Trans. Amer. Math. Soc.},
	mrclass = {42.4X},
	mrnumber = {0047179 (13,839a)},
	mrreviewer = {J. Korevaar},
	pages = {341--366},
	title = {A class of nonharmonic {F}ourier series},
	volume = {72},
	year = {1952},
	Bdsk-File-1 = {YnBsaXN0MDDSAQIDBFxyZWxhdGl2ZVBhdGhZYWxpYXNEYXRhXxAWLi4vLi4vTGlicmFyeS9EUzUyLnBkZk8RATgAAAAAATgAAgAADE1hY2ludG9zaCBIRAAAAAAAAAAAAAAAAAAAAAAAAABCRAAB/////whEUzUyLnBkZgAAAAAAAAAAAAAAAAAAAAAAAAAAAAAAAAAAAAAAAAAAAAAAAAAAAAAAAAAAAAAAAAAAAAAAAAD/////AAAAAAAAAAAAAAAAAAIAAgAACiBjdQAAAAAAAAAAAAAAAAAHTGlicmFyeQAAAgAlLzpVc2VyczppYWtyOkRyb3Bib3g6TGlicmFyeTpEUzUyLnBkZgAADgASAAgARABTADUAMgAuAHAAZABmAA8AGgAMAE0AYQBjAGkAbgB0AG8AcwBoACAASABEABIAI1VzZXJzL2lha3IvRHJvcGJveC9MaWJyYXJ5L0RTNTIucGRmAAATAAEvAAAVAAIAC///AAAACAANABoAJAA9AAAAAAAAAgEAAAAAAAAABQAAAAAAAAAAAAAAAAAAAXk=}}

@article{AHKLLV18,
	author = {Aldroubi, Akram and Huang, Longxiu and Krishtal, Ilya and Ledeczi, Akos and Lederman, Roy R. and Volgyesi, Peter},
	date-added = {2014-09-13 19:40:01 +0000},
	date-modified = {2019-01-11 15:18:37 -0600},
	journal = {Sampl. Theory Signal Image Process.},
	number = {2},
	pages = {153--182},
	title = {Dynamical Sampling with Additive Random Noise},
	volume = {17},
	year = {2018},
	Bdsk-File-1 = {YnBsaXN0MDDSAQIDBFxyZWxhdGl2ZVBhdGhZYWxpYXNEYXRhXxAbLi4vLi4vTGlicmFyeS9BSzE1QXJ4aXYucGRmTxEBSgAAAAABSgACAAAMTWFjaW50b3NoIEhEAAAAAAAAAAAAAAAAAAAAAAAAAEJEAAH/////DUFLMTVBcnhpdi5wZGYAAAAAAAAAAAAAAAAAAAAAAAAAAAAAAAAAAAAAAAAAAAAAAAAAAAAAAAAAAAAAAAAAAP////8AAAAAAAAAAAAAAAAAAgACAAAKIGN1AAAAAAAAAAAAAAAAAAdMaWJyYXJ5AAACACovOlVzZXJzOmlha3I6RHJvcGJveDpMaWJyYXJ5OkFLMTVBcnhpdi5wZGYADgAcAA0AQQBLADEANQBBAHIAeABpAHYALgBwAGQAZgAPABoADABNAGEAYwBpAG4AdABvAHMAaAAgAEgARAASAChVc2Vycy9pYWtyL0Ryb3Bib3gvTGlicmFyeS9BSzE1QXJ4aXYucGRmABMAAS8AABUAAgAL//8AAAAIAA0AGgAkAEIAAAAAAAACAQAAAAAAAAAFAAAAAAAAAAAAAAAAAAABkA==},
	Bdsk-File-2 = {YnBsaXN0MDDSAQIDBFxyZWxhdGl2ZVBhdGhZYWxpYXNEYXRhXxAWLi4vLi4vTGlicmFyeS9BSzE2LnBkZk8RATgAAAAAATgAAgAADE1hY2ludG9zaCBIRAAAAAAAAAAAAAAAAAAAAAAAAABCRAAB/////whBSzE2LnBkZgAAAAAAAAAAAAAAAAAAAAAAAAAAAAAAAAAAAAAAAAAAAAAAAAAAAAAAAAAAAAAAAAAAAAAAAAD/////AAAAAAAAAAAAAAAAAAIAAgAACiBjdQAAAAAAAAAAAAAAAAAHTGlicmFyeQAAAgAlLzpVc2VyczppYWtyOkRyb3Bib3g6TGlicmFyeTpBSzE2LnBkZgAADgASAAgAQQBLADEANgAuAHAAZABmAA8AGgAMAE0AYQBjAGkAbgB0AG8AcwBoACAASABEABIAI1VzZXJzL2lha3IvRHJvcGJveC9MaWJyYXJ5L0FLMTYucGRmAAATAAEvAAAVAAIAC///AAAACAANABoAJAA9AAAAAAAAAgEAAAAAAAAABQAAAAAAAAAAAAAAAAAAAXk=}}

@article{cabrelli2021multi,
  title={Multi-orbital frames through model spaces},
  author={Cabrelli, Carlos and Molter, Ursula and Su{\'a}rez, Daniel},
  journal={Complex Analysis and Operator Theory},
  volume={15},
  pages={1--22},
  year={2021},
  publisher={Springer}
}

@article{cabrelli2022frames,
  title={Frames of iterations and vector-valued model spaces},
  author={Cabrelli, Carlos and Molter, Ursula and Su{\'a}rez, Daniel},
  journal={arXiv preprint arXiv:2203.01301},
  year={2022}
}

@article{ashbrock2023dynamical,
  title={Dynamical dual frames with an application to quantization},
  author={Ashbrock, Jonathan and Powell, Alexander M},
  journal={Linear Algebra and its Applications},
  volume={658},
  pages={151--185},
  year={2023},
  publisher={Elsevier}
}

@article{martin2023error,
  title={Error analysis on the initial state reconstruction problem},
  author={Diaz Martin, Rocio and Medri, Ivan and Osorio, Juliana},
  journal={Sampling Theory, Signal Processing, and Data Analysis},
  volume={21},
  number={1},
  pages={18},
  year={2023},
  publisher={Springer}
}

@article{aldroubi2023dynamical,
  title={Dynamical Sampling for the Recovery of Spatially Constant Source Terms in Dynamical Systems},
  author={Aldroubi, Akram and Diaz Martin, Rocio and Medri, Ivan},
  journal={arXiv preprint arXiv:2308.01462},
  year={2023}
}

@article {Zlo22,
    AUTHOR = {Zlotnikov, Ilya},
     TITLE = {On planar sampling with {G}aussian kernel in spaces of
              bandlimited functions},
   JOURNAL = {J. Fourier Anal. Appl.},
  FJOURNAL = {The Journal of Fourier Analysis and Applications},
    VOLUME = {28},
      YEAR = {2022},
    NUMBER = {3},
     PAGES = {Paper No. 55, 24},
      ISSN = {1069-5869,1531-5851},
   MRCLASS = {42C15 (42B10 94A20)},
  MRNUMBER = {4439913},
MRREVIEWER = {A.\ Antony Selvan},
       DOI = {10.1007/s00041-022-09948-0},
       URL = {https://doi.org/10.1007/s00041-022-09948-0},
}

@article {ACCP23,
    AUTHOR = {Aguilera, A. and Cabrelli, C. and Carbajal, D. and
              Paternostro, V.},
     TITLE = {Frames by orbits of two operators that commute},
   JOURNAL = {Appl. Comput. Harmon. Anal.},
  FJOURNAL = {Applied and Computational Harmonic Analysis. Time-Frequency
              and Time-Scale Analysis, Wavelets, Numerical Algorithms, and
              Applications},
    VOLUME = {66},
      YEAR = {2023},
     PAGES = {46--61},
      ISSN = {1063-5203,1096-603X},
   MRCLASS = {42C15 (47A15 94A20)},
  MRNUMBER = {4586656},
       DOI = {10.1016/j.acha.2023.04.006},
       URL = {https://doi.org/10.1016/j.acha.2023.04.006},
}

@article {SCCM23,
    AUTHOR = {Sahu, Nabin Kumar and Chauhan, Shalini and Mohapatra, Ram N.},
     TITLE = {Representations of frames via iterative actions of operators
              in tensor product spaces},
   JOURNAL = {J. Pseudo-Differ. Oper. Appl.},
  FJOURNAL = {Journal of Pseudo-Differential Operators and Applications},
    VOLUME = {14},
      YEAR = {2023},
    NUMBER = {4},
     PAGES = {72},
      ISSN = {1662-9981,1662-999X},
   MRCLASS = {42C15 (46B28 46M05 47A80)},
  MRNUMBER = {4662456},
       DOI = {10.1007/s11868-023-00566-y},
       URL = {https://doi.org/10.1007/s11868-023-00566-y},
}

@article {CT22,
    AUTHOR = {Cheng, Jiahui and Tang, Sui},
     TITLE = {Estimate the spectrum of affine dynamical systems from partial
              observations of a single trajectory data},
   JOURNAL = {Inverse Problems},
  FJOURNAL = {Inverse Problems. An International Journal on the Theory and
              Practice of Inverse Problems, Inverse Methods and Computerized
              Inversion of Data},
    VOLUME = {38},
      YEAR = {2022},
    NUMBER = {1},
     PAGES = {Paper No. 015004, 42},
      ISSN = {0266-5611,1361-6420},
   MRCLASS = {37M25 (37M10)},
  MRNUMBER = {4356973},
       DOI = {10.1088/1361-6420/ac37fb},
       URL = {https://doi.org/10.1088/1361-6420/ac37fb},
}

@book{christensen2003introduction,
  title={An introduction to frames and Riesz bases},
  author={Christensen, Ole and others},
  volume={7},
  year={2003},
  publisher={Springer}
}
\end{document}